\documentclass[a4paper,12pt,reqno]{amsart}

\textwidth=16cm
\textheight=22cm
\oddsidemargin=0mm
\evensidemargin=0mm
\topmargin=0mm

\newtheorem{theorem}{Theorem}[section]
\newtheorem{lemma}[theorem]{Lemma}

\theoremstyle{definition}

\theoremstyle{remark}

\newcommand{\pt}{\partial}
\newcommand{\rre}{{\mathbb R}}
\newcommand{\cmx}{{\mathbb C}}
\newcommand{\ttt}{{\mathbb T}}
\newcommand{\zzz}{{\mathbb Z}}
\newcommand{\pe}{\psi_{\epsilon}}

\numberwithin{equation}{section}



\begin{document}

\title[fourth order
nonlinear Schr\"odinger type
equation]{Refined Energy Inequality with 
Application to Well-posedness for the Fourth Order\\
Nonlinear Schr\"odinger Type\\
Equation on Torus}

\author{Jun-ichi Segata}
\address{Mathematical Institute, Tohoku University, Aoba, Sendai 980-8578, Japan}
\email{segata@math.tohoku.ac.jp}
\thanks{The author is partially 
supported by MEXT, 
Grant-in-Aid for Young Scientists 
(B) 21740122.}

\subjclass[2000]{Primary 35Q55; Secondary 37K10}

\date{}


\keywords{nonlinear Schr\"{o}dinger type equation, well-posedness 
on torus}

\begin{abstract}
We consider the time local and global 
well-posedness for 
the fourth order nonlinear 
Schr\"{o}dinger type equation (4NLS) on the torus. 
The nonlinear term of (4NLS) contains the derivatives 
of unknown function and 
this prevents us to apply the classical energy method. 
To overcome this difficulty, we introduce the modified energy 
and derive an a priori estimate for the solution to (4NLS).  
\end{abstract}

\maketitle

\section{Introduction} \label{sec:intro}

We consider the fourth order nonlinear 
Schr\"{o}dinger type equation (4NLS) 
on the torus $\ttt=\rre/2\pi\zzz$:
\begin{eqnarray}
\left\{
\begin{array}{l}
\displaystyle{
i\pt_t\psi+\pt_x^2\psi+\nu\pt_x^4\psi
={{\mathcal N}}(\psi,\overline{\psi},
\pt_x\psi,\pt_x\overline{\psi},
\pt_x^2\psi,\pt_x^2\overline{\psi}),}\\
\displaystyle{\psi(0,x)=\phi(x),\qquad x\in\ttt,}
\end{array}
\right.
\label{4NLS}
\end{eqnarray}
where $\pt_t=\pt/\pt t$, $\pt_x=\pt/\pt x$, 
$\psi:\rre\times\ttt\to\cmx$ is an unknown 
function, and $\phi:\ttt\to\cmx$ is a given  
function. The nonlinear term ${{\mathcal N}}$ 
is given by
\begin{eqnarray*}
{{\mathcal N}}(\psi,\overline{\psi},
\ldots, 
\pt_x^2\psi,\pt_x^2\overline{\psi})
&=&
\lambda_1|\psi|^2\psi+\lambda_2|\psi|^4\psi
+\lambda_3(\pt_x\psi)^2\overline{\psi}
+\lambda_4|\pt_x\psi|^2\psi\\
& &+\lambda_5\psi^2\pt_x^2\overline{\psi}
+\lambda_6|\psi|^2\pt_x^2\psi,
\end{eqnarray*}
where $\nu\neq0$ and $\lambda_j,\ j=1,\cdots,6$ are 
real constants. 
The equation (\ref{4NLS}) arises in the context of 
a motion of vortex filament. 
More precisely, using the localized induction approximation, 
Da Rios \cite{Da} proposed 
some equation which approximates the three 
dimensional motion of an isolated vortex 
filament embedded in an inviscid incompressible 
fluid filling an infinite region. The Da Rios equation 
is reduced to the cubic nonlinear Schr\"{o}dinger 
equation
\begin{eqnarray}
i\pt_t\psi+\pt_x^2\psi=-\frac12|\psi|^2\psi, 
\quad(t,x)\in\rre\times\ttt
\label{NLS}
\end{eqnarray}
via the Hasimoto transform \cite{H}. 
To describe the motion of actual vortex filament more precisely, some detailed models taking 
into account the effect from higher order corrections of equation have been introduced 
by Fukumoto-Moffatt \cite{FM}. The Fukumoto-Moffatt equation is 
rewritten as (\ref{4NLS}) by using the Hasimoto transform. 
For the physical background of (\ref{4NLS}), see Fukumoto-Moffatt 
\cite{FM}.

In this paper we consider the time local well-posedness for (\ref{4NLS}) 
on the Sobolev spaces $H^m(\ttt)$. 
Our notion of well-poseness contains the 
existence and uniqueness of the solution and 
the continuity of the data-to-solution map. 
We also consider the persistent property 
of the solution, that is, the solution describes a continuous curve 
in $H^m(\ttt)$ whenever $\phi\in H^m(\ttt)$. 
Our motivation to consider the time local well-posedness for (\ref{4NLS}) is 
that we are interested in the stability of the 
standing wave solution $\psi(t,x)=e^{i\omega t}\varphi_{\omega}(x)$ 
to (\ref{4NLS}). When (\ref{4NLS}) is completely integrable (see the later half 
of this section below for the detail), 
(\ref{4NLS}) has the sech-type standing wave solution. 
The orbital stability in $H^m(\rre)$ of the 
sech-type standing wave solution 
is proved by \cite{MS}. On the other hand,  
we easily see that (\ref{4NLS}) has a exact periodic standing wave 
solution of the form 
$\psi(t,x)=\kappa e^{i\tau x+i\omega t}$ for some real constants $\kappa,\tau$ 
and $\omega$. It is interesting that the sech-type standing wave 
and the periodic standing wave 
correspond to the tornado like curve and the helicoid curve 
in the motion of the vortex filament, see Kida \cite{Kida}. 

As the first step to show the orbital stability of 
the sech-type and the periodic standing wave, we need to prove the 
global well-posedness for (\ref{4NLS}) in the Sobolev spaces 
on the real line $\rre$ and 
on the torus $\ttt$, respectively. 
Concerning the local well-posedness of (\ref{4NLS}) 
on real line $\rre$,  
Segata \cite{S1,S2,S3} and Huo-Jia \cite{HJ1,HJ2} proved 
that the initial value 
problem of (\ref{4NLS}) is locally well-posed in 
Sobolev space $H^s(\rre)$ with $s>1/2$  
by using the Fourier restriction method 
introduced by Bourgain \cite{B} and  
Kenig-Ponce-Vega \cite{KPV2,KPV3}. 
As far as we know, 
there is no result on the well-posedness 
of (\ref{4NLS}) under the periodic boundary condition. 

In this paper we focus on the well-posedness of 
(\ref{4NLS}) on the torus. There is a large literature on 
the well-posedness for the dispersive equations in the 
torus. See for instance \cite{D,KPV1,M,MV} for the linear dispersive equations 
and \cite{ABFS,B,C,GH,H,Sch,TF1,TF2} 
for the non-linear dispersive equations. 
We summarize the well-posedness on the 
derivative nonlinear Schr\"{o}dinger equation 
with the periodic boundary condition. 
Tsutsumi-Fukuda \cite{TF1,TF2} proved the 
local and global 
well-posedness for the Schr\"{o}dinger  
equation with some nonlinearity on the torus 
by using the classical energy method. 
Gr\"{u}nrock-Herr \cite{GH} and Herr \cite{H} obtained 
sharp well-posedness results for some 
derivative nonlinear Schr\"{o}dinger  
equation on the torus by using the Fourier restriction method. 
The well-posedness 
of the Schr\"{o}dinger equation for more general derivative 
nonlinearity in the $n$-dimensional torus was given by 
Chihara \cite{C}. 
We notice that the classical energy method 
does not works for his setting. In \cite{C} 
he conquered this problem by using the 
pseudo-differential operators with non-smooth 
coefficients on the torus. 

As we shall see below, the dispersive equations on the torus 
do not have fine properties compared to the real line case. 
Therefore the proof of the well-posedness on the torus 
become increasingly harder than the real line case. 
To state our results more precisely, we introduce several notations. 
Given a function $\psi$ on $\ttt$, we define the Fourier 
coefficient of $\psi$, by
\begin{eqnarray*}
\hat{\psi}(n)=\frac{1}{\sqrt{2\pi}}\int_0^{2\pi}
\psi(x)e^{-inx}dx,\qquad n\in\zzz.
\end{eqnarray*}
Let $m$ be a non-negative integer. $H^m(\ttt)$ 
denotes the all tempered distributions on $\ttt$ 
satisfying 
\begin{eqnarray*}
\|\psi\|_{H_x^m}=(\sum_{n\in\zzz}\langle n\rangle^{2m}
|\hat{\psi}(n)|^2)^{1/2}<+\infty, 
\end{eqnarray*}
where $\langle n\rangle=\sqrt{1+n^2}$. 

The main result in this paper is the following:

\begin{theorem}\label{well-posedness} Let $m\ge4$ be 
an integer. Then (\ref{4NLS}) is locally well-posed 
in the following sense:
For any $\phi\in H^m(\ttt)$, there exists a time 
$T=T(\|\phi\|_{H^m})>0$ and a unique solution 
$\psi$ of (\ref{4NLS}) satisfying
\begin{eqnarray*}
\psi\in C([0,T);H^m(\rre)).
\end{eqnarray*} 
Moreover, the data-to-solution map 
$H^m(\ttt)\rightarrow C([0,T];H^m(\ttt)) 
(\phi\mapsto\psi(t))$ is continuous.
\end{theorem}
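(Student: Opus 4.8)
The plan is to prove Theorem~\ref{well-posedness} by a refined energy method, the decisive ingredient being a \emph{modified} energy that absorbs the loss of derivatives caused by the second-order terms $\lambda_5\psi^2\pt_x^2\overline{\psi}$ and $\lambda_6|\psi|^2\pt_x^2\psi$ in $\mathcal N$. First I would construct smooth approximate solutions $\pe$ (for definiteness by frequency truncation, i.e.\ projecting the equation onto frequencies $\abs{n}\le 1/\epsilon$, or alternatively by a parabolic regularization with a dissipative term $-\epsilon\pt_x^4\pe$). For each $\epsilon>0$ the approximate problem is a locally well-posed ODE, resp.\ parabolic equation, in $H^m(\ttt)$. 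The whole problem then reduces to showing that the family $\{\pe\}$ is bounded in $H^m(\ttt)$ on a common interval $[0,T]$ with $T=T(\|\phi\|_{H^m})$ independent of $\epsilon$, after which existence follows by compactness.

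To obtain the uniform bound I would differentiate $\|\pe\|_{H_x^m}^2$ in time. Since the linear operator $i(\pt_x^2+\nu\pt_x^4)$ is skew-adjoint and commutes with the Fourier multiplier $\langle n\rangle^m$, its contribution vanishes (and any dissipative regularization only improves the estimate), so only $\mathcal N$ remains. Writing $v=\pt_x^m\pe$, the contributions in which the $m$ derivatives are distributed among several factors are estimated by $C(\|\pe\|_{H^m})\|\pe\|_{H^m}^2$ using that $H^m(\ttt)$ is a Banach algebra for $m\ge4$ and the Sobolev embedding $H^{m-2}(\ttt)\hookrightarrow L^\infty(\ttt)$. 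The dangerous contributions are those with all derivatives on the top factor; from $\lambda_6$ this is $\mathrm{Im}\int|\pe|^2\,\pt_x^2 v\,\overline v\,dx$ (with a companion from $\lambda_5$). Integrating by parts, the term $-\int|\pe|^2|\pt_x v|^2\,dx$ is real and therefore disappears after taking the imaginary part, leaving the genuinely obstructing first-order cross term $\int\pt_x(|\pe|^2)\,\overline v\,\pt_x v\,dx$, which cannot be bounded by $\|\pe\|_{H^m}^2$ alone.

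To remove this obstruction I would add to $\|\pe\|_{H_x^m}^2$ a correction $N_m(\pe)$, quadratic in $v=\pt_x^m\pe$ with coefficients built from $\pe$ and $\pt_x\pe$, designed so that $\tfrac{d}{dt}N_m$ cancels exactly the residual first-order terms coming from $\lambda_3,\dots,\lambda_6$. This cancellation relies on the structure of the equation, namely that $\pt_x(|\pe|^2)$ is real and that the Schr\"odinger operator is skew-symmetric. Setting $\mathcal E_m(\pe)=\|\pe\|_{H_x^m}^2+N_m(\pe)$, for $m\ge4$ the coefficients occurring in $N_m$ (which involve up to three derivatives of the lower-order factors) are bounded, so $N_m$ is a genuine lower-order correction, $\mathcal E_m(\pe)\simeq\|\pe\|_{H_x^m}^2$ uniformly while $\|\pe\|_{H^m}$ stays bounded, and one obtains the refined energy inequality $\tfrac{d}{dt}\mathcal E_m(\pe)\le C(\|\pe\|_{H^m})$. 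A standard continuity argument converts this into a uniform $H^m$ bound on $[0,T]$ with $T=T(\|\phi\|_{H^m})$.

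Finally I would let $\epsilon\to0$: the uniform bound together with the equation gives equicontinuity of $\pe$ in a weaker norm, and Aubin--Lions compactness yields a limit $\psi\in L^\infty([0,T);H^m(\ttt))$ solving (\ref{4NLS}); continuity in time with values in $H^m(\ttt)$ is then recovered from weak continuity together with continuity of $\mathcal E_m$ in $t$. For uniqueness I would apply the same mechanism to the difference $w=\psi_1-\psi_2$ of two solutions at a lower level (e.g.\ in $L^2$ or $H^{m-2}$), again using an appropriate correction term to control the derivatives in the difference equation, to conclude $\|w(t)\|\le C\|w(0)\|$. Continuous dependence and persistence would follow by a Bona--Smith argument: approximating $\phi$ by smooth data and combining the uniform energy bounds, the modified-energy identity and the uniqueness estimate shows the corresponding solutions converge in $C([0,T];H^m(\ttt))$, which gives continuity of the data-to-solution map. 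I expect the construction of the correction $N_m$---identifying the precise lower-order expression whose time derivative annihilates the first-order loss from the $\lambda_5,\lambda_6$ (and $\lambda_3,\lambda_4$) terms without creating a new loss---to be the main obstacle.
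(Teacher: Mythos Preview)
Your overall architecture---regularize, prove a uniform-in-$\epsilon$ $H^m$ bound via a modified energy, pass to the limit, then Bona--Smith for persistence and continuity---matches the paper exactly. The genuine gap is in your specification of the correction $N_m$. You propose $N_m$ quadratic in $v=\pt_x^m\pe$ with coefficients in $\pe,\pt_x\pe$; this cannot work. When you differentiate such a term in time, the dominant contribution comes from the linear part $i\nu\pt_x^4$ acting through $\pt_t(\pt_x^m\pe)$, which introduces $\pt_x^{m+4}\pe$; after integrations by parts you are left with expressions containing $\pt_x^{m+2}\pe$, i.e.\ a loss \emph{worse} than the $\pt_x^{m+1}\pe$ you are trying to absorb. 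The paper's key observation is that the correction must sit one order \emph{below}:
\[
N_m(\psi)=\frac{\lambda_5}{\nu}\,\mathrm{Re}\!\int_{\ttt}(\pt_x^{m-1}\psi)^2\overline{\psi}^2\,dx
+\frac{2\lambda_3+\lambda_4+2(m-1)\lambda_6}{4\nu}\int_{\ttt}|\pt_x^{m-1}\psi|^2|\psi|^2\,dx.
\]
Then $\pt_t(\pt_x^{m-1}\psi)$ brings in $i\nu\pt_x^{m+3}\psi$, and two integrations by parts produce precisely the two obstructing terms
$\mathrm{Im}\int_{\ttt}\psi^2(\pt_x^{m+1}\overline{\psi})^2\,dx$ and
$\mathrm{Im}\int_{\ttt}\overline{\psi}\,\pt_x\psi\cdot\pt_x^m\overline{\psi}\,\pt_x^{m+1}\psi\,dx$
with the opposite sign. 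The cancellation is driven by the \emph{linear} dispersion $i\nu\pt_x^4$ (hence the $1/\nu$ in the coefficients), not by the reality of $\pt_x(|\psi|^2)$ as you suggest; the latter only explains why the $\lambda_6$ top-order term is harmless before the integration by parts that generates the residual first-order loss.

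A secondary point: uniqueness at the $L^2$ level runs into the same obstruction, since the natural correction for $m=0$ would involve $\pt_x^{-1}\psi$. The paper therefore proves uniqueness in $H^1$, using the analogous modified energy $\tilde E_1$ for the difference $\psi_1-\psi_2$; this $H^1$ difference estimate (which requires the solutions to lie in $H^3$, hence $m\ge4$ after the Bona--Smith loss) is also what feeds the continuous-dependence argument.
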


The difficulty in the proof of time local well-posedness 
of (\ref{4NLS}) arises in so called ``loss of a derivatives". 
More precisely, the standard energy estimate gives only the following: 
\begin{eqnarray}
\lefteqn{\frac{d}{dt}\|\pt_x^m\psi(t)\|_{L_x^2}^2}\label{e2}\\
&=&2\{2\lambda_3+\lambda_4+2(m-1)\lambda_6\}
\mbox{Im}\int_{\ttt}\overline{\psi}\pt_x\psi\cdot\pt_x^m\overline{\psi}\pt_x^{m+1}\psi dx
\nonumber\\
& &-2\lambda_5\mbox{Im}
\int_{\ttt}\psi^2(\pt_x^{m+1}\overline{\psi})^2dx
+\ell.o.t.
\nonumber
\end{eqnarray}
Since the first and second terms in the 
right hand side of (\ref{e2}) contain the $(m+1)$-st derivatives 
of $\psi$, we cannot control those factors in terms of $H^m$ norm of $\psi$.
Therefore this estimate does not give an a priori estimate for the solution. 

For the real line case, 
the unitary group 
$\{e^{it(\pt_x^2+\nu\pt_x^4)}\}_{t\in\rre}$ generated by
the linear operator $i\pt_x^2+i\nu\pt_x^4$ 
gains extra smoothness in space variable, see 
Kenig-Ponce-Vega \cite{KPV1}. 
Thanks to this smoothing property for 
$\{e^{it(\pt_x^2+\nu\pt_x^4)}\}_{t\in\rre}$, in 
\cite{S1,HJ1,S2,HJ2} they could overcome 
a loss of derivatives and 
guarantee the well-posedness of (\ref{4NLS}) 
on $\rre$. However, for the periodic case 
the corresponding unitary group  does not 
have such a fine properties (see e.g., \cite{D}) 
and it is not likely that the contraction mapping 
principle guarantees the well-posedness for 
(\ref{4NLS}) on $\ttt$. Since this is the case 
we abandon making use of the property of the unitary group 
$\{e^{it(\pt_x^2+\nu\pt_x^4)}\}_{t\in\rre}$  
and try to this issue by a different approach. 

Let us return the estimate (\ref{e2}). If we contrive to eliminate the worst terms, 
we can obtain an a priori estimate of solution. 
In this paper we take a hint from Kwon \cite{Kwon} 
which is concerned with the well-posedness for the fifth-order KdV equation on $\rre$, 
we introduce the``modified" energy:
\begin{eqnarray*}
\lefteqn{[E_m(\psi)](t)}\\
&=&\|\pt_x^m\psi(t)\|_{L_x^2}^{2}
+\|\psi(t)\|_{L_x^2}^2
+C_m\|\psi(t)\|_{L_x^2}^{4m+2}\\
& &
+\frac{\lambda_5}{\nu}\mbox{Re}\int_{\ttt}(\pt_x^{m-1}\psi)^2
\overline{\psi}^2dx
+\frac{2\lambda_3+\lambda_4+2(m-1)\lambda_6}{4\nu}
\int_{\ttt}|\pt_x^{m-1}\psi|^2|\psi|^2dx,
\end{eqnarray*}
where $C_m$ is a sufficiently large constant depending only on 
$m$ so that $E_m(\psi)$ is positive. Thanks  to the correction terms 
we can eliminate the worst factors in $(\ref{e2})$ and 
evaluate the $H^m$ norm of the solution $\psi$ 
to (\ref{4NLS}) in terms of the 
$H^m$ norm of the initial data 
$\phi$. This is a crucial point in the proof of Theorem \ref{well-posedness}. 

It is known that $(\ref{4NLS})$ is completely 
integrable if and only if $\lambda_1=-1/2$, 
$\lambda_2=-3\nu/8,
\lambda_3=-3\nu/2$, 
$\lambda_4=-\nu, \lambda_5=-\nu/2$ and 
$\lambda_6=-2\nu$. In this case 
(\ref{4NLS}) has infinitely many  
conservation quantities, see Langer and Perline \cite{LP}. 
The first three conservation quantities for (\ref{4NLS}) are given by 
\begin{eqnarray*}
I_0(\psi)&=&\frac12\int_{\ttt}|\psi|^2dx,\\
I_1(\psi)&=&\frac12\int_{\ttt}|\pt_x\psi|^2dx
-\frac18\int_{\ttt}|\psi|^4dx,\\
I_2(\psi)&=&
\frac12\int_{\ttt}|\pt_x^2\psi|^2dx
+\frac34\int_{\ttt}|\psi|^2\overline{\psi}\pt_x^2\psi dx
+\frac18\int_{\ttt}|\psi|^2\psi\pt_x^2\overline{\psi}dx,\\
& &+\frac58\int_{\ttt}(\pt_x\psi)^2
\overline{\psi}^2dx
+\frac34\int_{\ttt}|\pt_x\psi|^2|\psi|^2dx
+\frac{1}{16}\int_{\ttt}|\psi|^6dx.
\end{eqnarray*}
In general, the conservation quantities 
for (\ref{4NLS}) are expressed as 
\begin{eqnarray*}
I_m(\psi)=\frac12\int_{\ttt}|\pt_x^m\psi|^2+
\int_{\ttt}Q_m(\psi,\overline{\psi},
\ldots, 
\pt_x^{m-1}\psi,\pt_x^{m-1}\overline{\psi})dx,
\end{eqnarray*}
where $Q_m$ are some polynomials in 
$(\psi,\overline{\psi},
\ldots, 
\pt_x^{m-1}\psi,\pt_x^{m-1}\overline{\psi})$ satisfying
the inequalities $|Q_m|\le C_m\|\psi\|_{L_x^2}^{\alpha_m}
\|\pt_x^m\psi\|_{L_x^2}^{\beta_m}$ for some $\alpha_m>0$ 
and $0<\beta_m<2$. 
Therefore combining Theorem \ref{well-posedness}, 
the conservation laws $I_m(\psi)(t)=I_m(\psi)(0)$ 
and Young's inequality, 
we obtain the 
global existence theorem for (\ref{4NLS}) in $H^m(\ttt)$:

\begin{theorem} 
Assume $\lambda_1=-1/2$, 
$\lambda_2=-3\nu/8,
\lambda_3=-3\nu/2$, 
$\lambda_4=-\nu, \lambda_5=-\nu/2$ and 
$\lambda_6=-2\nu$. Then 
(\ref{4NLS}) is globally well-posed in $H^m(\ttt)$  
with an integer $m$ greater than 3.
\end{theorem}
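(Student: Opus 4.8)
The plan is to combine the local well-posedness result of Theorem~\ref{well-posedness} with the conservation of the quantities $I_m$ through a standard continuation argument. The essential point is that the existence time $T$ produced by Theorem~\ref{well-posedness} depends only on $\|\phi\|_{H_x^m}$; consequently, if one can derive an a priori bound on $\|\psi(t)\|_{H_x^m}$ that is uniform over the whole interval of existence, the solution can be extended to all times.

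First I would extract this a priori bound from the conserved quantities. Since the coefficients are fixed at the completely integrable values, the functionals $I_0,\dots,I_m$ are all constant along the flow. Conservation of $I_0$ gives $\|\psi(t)\|_{L_x^2}=\|\phi\|_{L_x^2}$ for every $t$. Solving the identity $I_m(\psi)(t)=I_m(\psi)(0)$ for the top-order term and using the structural bound on $Q_m$ from the introduction, I obtain
\begin{eqnarray*}
\frac12\|\pt_x^m\psi(t)\|_{L_x^2}^2
&=&I_m(\phi)-\int_{\ttt}Q_m(t)\,dx\\
&\le&|I_m(\phi)|+C_m\|\phi\|_{L_x^2}^{\alpha_m}
\|\pt_x^m\psi(t)\|_{L_x^2}^{\beta_m}.
\end{eqnarray*}
Because $0<\beta_m<2$, Young's inequality with the conjugate exponents $2/\beta_m$ and $2/(2-\beta_m)$ allows me to absorb the last factor into the left-hand side, which yields
\begin{eqnarray*}
\|\pt_x^m\psi(t)\|_{L_x^2}^2&\le&C(\|\phi\|_{H_x^m},m)
\end{eqnarray*}
uniformly in $t$. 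Together with the conservation of the $L_x^2$ norm, this controls $\|\psi(t)\|_{H_x^m}$ uniformly on the interval of existence.

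With the uniform bound in hand I would run the continuation argument. Suppose the maximal existence time $T^*$ were finite. Applying Theorem~\ref{well-posedness} with initial datum $\psi(t_0)$ at a time $t_0$ close to $T^*$ yields a solution on $[t_0,t_0+\tau)$ with $\tau=\tau(\|\psi(t_0)\|_{H_x^m})$; since $\|\psi(t_0)\|_{H_x^m}$ is dominated by the uniform bound independently of $t_0$, the length $\tau$ is bounded below by a fixed positive constant. This lets me extend the solution beyond $T^*$, contradicting maximality, so $T^*=\infty$. As (\ref{4NLS}) and the local theory are invariant under time reversal, the same reasoning for $t<0$ produces a solution on all of $\rre$.

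The main obstacle is a regularity matter: the conservation identities $I_m(\psi)(t)=I_m(\psi)(0)$ are derived formally and are rigorously valid for smooth solutions, whereas Theorem~\ref{well-posedness} only furnishes solutions in $H^m(\ttt)$. To close this gap I would argue by approximation. Choosing $\phi_k\in C^\infty(\ttt)$ with $\phi_k\to\phi$ in $H^m(\ttt)$, the corresponding solutions $\psi_k$ are smooth and therefore satisfy the conservation laws exactly, so the a priori bound above holds for each $\psi_k$ with a constant depending only on $\sup_k\|\phi_k\|_{H_x^m}$. Passing to the limit by means of the continuity of the data-to-solution map asserted in Theorem~\ref{well-posedness}, together with the continuity of the functionals $I_m$ on $H^m(\ttt)$---which is a direct consequence of the estimate $|Q_m|\le C_m\|\psi\|_{L_x^2}^{\alpha_m}\|\pt_x^m\psi\|_{L_x^2}^{\beta_m}$---transfers both the conservation law and the uniform bound to the limiting solution, completing the argument.
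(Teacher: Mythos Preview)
Your proposal is correct and follows essentially the same approach as the paper: the paper's proof is the single sentence ``combining Theorem~\ref{well-posedness}, the conservation laws $I_m(\psi)(t)=I_m(\psi)(0)$ and Young's inequality,'' and you have simply fleshed out the details of that sentence, including the standard continuation argument and the approximation step needed to justify the conservation laws for $H^m$ solutions.
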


Finally we point out that by combining our proof 
with the estimates for the fractional derivatives 
we may well be able to 
extend Theorem \ref{well-posedness} to 
the case where $m$ is not an integer . 
In this paper we do not touch on this issue.  

The plan of this paper is as follows. 
Section 2 is devoted to the parabolic regularization  
associated to (\ref{4NLS}). In Section 3, 
we introduce the modified energy and give an 
a priori estimate for the solution to (\ref{4NLS}). 
Then we shall  
prove the existence of solution to (\ref{4NLS}). 
In Section 4 we give the proofs of the uniqueness and 
the persistent properties of solution to (\ref{4NLS}), and 
the continuous dependence of the solution to (\ref{4NLS}) 
on the initial data. 

\section{Parabolic Regularization} \label{sec:parabolic}

In this section, we consider the parabolic regularization of 
(\ref{4NLS}) in $H^m(\ttt)$. 
We first give the Gagliardo-Nirenberg inequality 
for the periodic functions. 

\begin{lemma}\label{gn} 
Let $l$ and $m$ be integers satisfying $0\le l\le m-1$ 
and let $2\le p\le\infty$. 
Then there exists a constant $C$ depending only on $l,m$ 
and $p$ 
such that for any $\psi\in H^m(\ttt)$, 
\begin{eqnarray*}
\|\pt_x^l\psi\|_{L_x^p}\le
C\times\left\{
\begin{array}{l}
\displaystyle{\|\psi\|_{L_x^2}^{1-\alpha}\|\pt_x^m\psi\|_{L_x^2}^{\alpha}
\qquad\qquad(1\le l\le m-1),
}\\
\displaystyle{\|\psi\|_{L_x^2}^{1-\alpha}\|\pt_x^m\psi\|_{L_x^2}^{\alpha}
+\|\psi\|_{L_x^2}\qquad(l=0),}
\end{array}
\right.
\end{eqnarray*}
where $\displaystyle{\alpha=(l+1/2-1/p)/m}$. 
Especially, we have $\|\pt_x^l\psi\|_{L_x^p}\le
C\|\psi\|_{L_x^2}^{1-\alpha}\|\psi\|_{H_x^m}^{\alpha}$.
\end{lemma}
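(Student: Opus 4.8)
The plan is to work directly with Fourier coefficients and reduce the whole family of estimates to the two endpoint exponents $p=2$ and $p=\infty$, from which the intermediate cases follow by the log-convexity of the $L_x^p$ norms. First I would record the $p=2$ case, which is nothing but interpolation of homogeneous Sobolev norms: writing $\abs{n}^{2l}=(\abs{n}^{2m})^{l/m}$ and applying H\"older's inequality to the series $\sum_n\abs{n}^{2l}\abs{\hat\psi(n)}^2$ with exponents $m/l$ and $m/(m-l)$ gives $\|\pt_x^l\psi\|_{L_x^2}\le\|\psi\|_{L_x^2}^{1-l/m}\|\pt_x^m\psi\|_{L_x^2}^{l/m}$, which is the assertion for $p=2$ since there $\alpha=l/m$.

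The crux is the endpoint $p=\infty$. Here I would start from the absolutely convergent expansion $\|\pt_x^l\psi\|_{L_x^\infty}\le C\sum_n\abs{n}^l\abs{\hat\psi(n)}$ and split the sum at a frequency $N\ge1$. On the low range $\abs{n}\le N$ I apply Cauchy--Schwarz against $\|\psi\|_{L_x^2}$, using $\sum_{\abs{n}\le N}\abs{n}^{2l}\le CN^{2l+1}$, to get a contribution $\le CN^{l+1/2}\|\psi\|_{L_x^2}$; on the high range $\abs{n}>N$ I apply Cauchy--Schwarz against $\|\pt_x^m\psi\|_{L_x^2}$, using that $\sum_{\abs{n}>N}\abs{n}^{2(l-m)}\le CN^{2(l-m)+1}$ converges precisely because $l\le m-1<m-1/2$ forces $2(l-m)<-1$, to obtain $\le CN^{l-m+1/2}\|\pt_x^m\psi\|_{L_x^2}$. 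Choosing $N=(\|\pt_x^m\psi\|_{L_x^2}/\|\psi\|_{L_x^2})^{1/m}$ to balance the two terms yields exactly $\|\pt_x^l\psi\|_{L_x^\infty}\le C\|\psi\|_{L_x^2}^{1-\beta}\|\pt_x^m\psi\|_{L_x^2}^{\beta}$ with $\beta=(l+1/2)/m$, the stated inequality at $p=\infty$. When the optimal $N$ falls below $1$ one simply takes $N=1$, and the low-frequency estimate dominates so that the inequality holds a fortiori.

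With both endpoints in hand, the general case follows by interpolation in $L_x^p$: for $2\le p\le\infty$ and $g=\pt_x^l\psi$, H\"older gives $\|g\|_{L_x^p}\le\|g\|_{L_x^2}^{2/p}\|g\|_{L_x^\infty}^{1-2/p}$, and inserting the $p=2$ and $p=\infty$ bounds produces an exponent on $\|\pt_x^m\psi\|_{L_x^2}$ equal to $\frac2p\cdot\frac lm+(1-\frac2p)\cdot\frac{l+1/2}m=\frac1m(l+\frac12-\frac1p)=\alpha$, as required, with the complementary power $1-\alpha$ on $\|\psi\|_{L_x^2}$ by homogeneity. For $l=0$ one must separate the zero mode, which the derivative annihilates when $l\ge1$ but not here: estimating $\abs{\hat\psi(0)}\le\|\psi\|_{L_x^2}$ and running the same frequency splitting on $\sum_{n\neq0}\abs{\hat\psi(n)}$ produces the extra additive term $\|\psi\|_{L_x^2}$ recorded in the statement, and the final ``Especially'' claim is immediate from $\|\pt_x^m\psi\|_{L_x^2}\le\|\psi\|_{H_x^m}$.

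The main obstacle is the endpoint $p=\infty$: the naive Sobolev embedding $H_x^s\hookrightarrow L_x^\infty$ requires $s>1/2$ and hence loses the critical exponent, so the frequency-splitting-and-optimization argument is essential in order to land on $\beta=(l+1/2)/m$ exactly rather than on $\beta+\epsilon$.
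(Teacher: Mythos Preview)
Your argument is correct and in fact supplies more than the paper does: the paper gives no proof of this lemma, merely citing \cite{Sch}, Section~2. Your route---Fourier interpolation for $p=2$, a frequency-splitting/optimization argument for $p=\infty$, and log-convexity of $L^p$ norms for intermediate $p$---is a standard and complete derivation of the periodic Gagliardo--Nirenberg inequality with the sharp exponent $\alpha=(l+1/2-1/p)/m$.

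One small wording issue: in the regime where the optimal $N$ falls below $1$ (equivalently $\|\pt_x^m\psi\|_{L_x^2}\le\|\psi\|_{L_x^2}$), saying ``the low-frequency estimate dominates'' is not quite right for $l\ge1$. The clean fix is to observe that the zero mode does not contribute when $l\ge1$, so the entire Fourier sum runs over $|n|\ge1$ and a single application of Cauchy--Schwarz against $\|\pt_x^m\psi\|_{L_x^2}$ gives $\|\pt_x^l\psi\|_{L_x^\infty}\le C\|\pt_x^m\psi\|_{L_x^2}$ outright; since $\|\pt_x^m\psi\|_{L_x^2}\le\|\psi\|_{L_x^2}$ in this regime, this is bounded by $C\|\psi\|_{L_x^2}^{1-\beta}\|\pt_x^m\psi\|_{L_x^2}^{\beta}$ as needed. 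With that adjustment the proof is complete.
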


\begin{proof}
See for instance, \cite[Section 2]{Sch}. 
\end{proof}

Next we consider the parabolic regularization of 
(\ref{4NLS}). Let us introduce the regularizing 
sequence used in Bona-Smith \cite{BS}. Let $\varphi\in C^{\infty}(\rre)$ 
be such that $0\le\varphi(\xi)\le1$ for $\xi\in\rre$, $\varphi^{(k)}(0)=0$ 
for $k\in{\mathbb N}$ and $\varphi(\xi)$ tends exponentially to $0$ as 
$|\xi|\to\infty$. We define for $\epsilon\in(0,1]$, 
\begin{eqnarray*}
\phi_{\epsilon}(x)=\frac{1}{\sqrt{2\pi}}
\sum_{n\in{\mathbb Z}}\varphi(\epsilon n)\hat{\phi}(n)e^{inx}.
\end{eqnarray*}
Then, $\{\phi_{\epsilon}\}_{\epsilon>0}\in H^{\infty}(\ttt)$ and 
$\|\phi-\phi_{\epsilon}\|_{H_x^m}\to0$ as $\epsilon\to0$. Furthermore, 
for any $l\ge0$, 
\begin{eqnarray*}
\|\phi_{\epsilon}\|_{H_x^{m+l}}&\le& C\epsilon^{-l}\|\phi\|_{H_x^m},\\
\|\phi-\phi_{\epsilon}\|_{H_x^{m}}&\le& C\|\phi\|_{H_x^m},\\
\|\phi-\phi_{\epsilon}\|_{H_x^{m-l}}&\le& C\epsilon^{l}\|\phi\|_{H_x^m}.
\end{eqnarray*}
We consider the regularized problem of (\ref{4NLS}):
\begin{eqnarray}
\left\{
\begin{array}{l}
\displaystyle{
i\pt_t\pe+\pt_x^2\pe+(\nu+i\epsilon)\pt_x^4\pe
={{\mathcal N}}(\pe,\overline{\psi}_{\epsilon},
\ldots,\pt_x^2\pe,\pt_x^2\overline{\psi}_{\epsilon}),
}\\
\displaystyle{\pe(0,x)=\phi_{\epsilon}(x),
}
\end{array}
\right.
\label{3.1}
\end{eqnarray}
where $\psi_{\epsilon}:\rre\times\ttt\to\cmx$ is an unknown 
function, and $\phi_{\epsilon}:\ttt\to\cmx$ is a 
Bona-Smith approximation of $\phi$. 
Concerning the solvability of (\ref{3.1}), we have the following lemma.

\begin{lemma}\label{l3.1}
Let $m\ge3$ be an integer. For any $\phi\in H^m(\ttt)$, 
there exists a time $T_{\epsilon}=T(\epsilon,\|\phi\|_{H^m})>0$ 
and a unique solution $\pe$ of (\ref{3.1}) satisfying 
\begin{eqnarray*}
\pe\in C([0,T_{\epsilon}),H^m(\ttt)).
\end{eqnarray*}
\end{lemma}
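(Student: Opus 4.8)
The plan is to recast (\ref{3.1}) as a Duhamel integral equation and solve it by the contraction mapping principle, exploiting the parabolic smoothing supplied by the regularizing term $i\epsilon\pt_x^4$. First I would rewrite the equation in the evolutionary form $\pt_t\pe=L_{\epsilon}\pe-i\mathcal{N}(\pe,\overline{\psi}_{\epsilon},\ldots)$, where $L_{\epsilon}=-\epsilon\pt_x^4+i\nu\pt_x^4+i\pt_x^2$ generates the semigroup $S_{\epsilon}(t)$ whose Fourier multiplier is $e^{t(-\epsilon n^4+i\nu n^4-in^2)}$. Since the real part of the exponent is $-\epsilon tn^4$, one has $|\widehat{S_{\epsilon}(t)f}(n)|=e^{-\epsilon tn^4}|\hat f(n)|$, so $S_{\epsilon}(t)$ is bounded on every $H_x^s(\ttt)$ and, more importantly, smoothing: from $\sup_{n}\langle n\rangle^{4}e^{-2\epsilon tn^4}\le C(1+(\epsilon t)^{-1})$ one reads off the gain of two derivatives
\[
\|S_{\epsilon}(t)f\|_{H_x^m}\le C\bigl(1+(\epsilon t)^{-1/2}\bigr)\|f\|_{H_x^{m-2}},\qquad t>0.
\]
The solution is then sought as a fixed point of $\Phi(\pe)(t)=S_{\epsilon}(t)\phi_{\epsilon}-i\int_0^t S_{\epsilon}(t-s)\mathcal{N}(\pe(s),\ldots)\,ds$ in a closed ball of $C([0,T_{\epsilon}];H_x^m(\ttt))$.

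The crucial nonlinear estimate is that, although $\mathcal{N}$ loses two derivatives and hence cannot be bounded in $H_x^m$, it is perfectly well behaved in $H_x^{m-2}$. Indeed, for $m\ge3$ the space $H_x^{m-2}(\ttt)$ is a Banach algebra, and every monomial of $\mathcal{N}$ is a product of factors of the form $\pt_x^j\psi$ or $\pt_x^j\overline{\psi}$ with $0\le j\le2$; the highest-order factor obeys $\|\pt_x^2\psi\|_{H_x^{m-2}}\le\|\psi\|_{H_x^m}$. Hence, using the algebra property together with Lemma \ref{gn} to place the lower-order factors in $L_x^{\infty}$, one obtains a polynomial bound $\|\mathcal{N}(\pe,\ldots)\|_{H_x^{m-2}}\le P(\|\pe\|_{H_x^m})$, and the matching local Lipschitz estimate $\|\mathcal{N}(\pe,\ldots)-\mathcal{N}(\tilde\psi_{\epsilon},\ldots)\|_{H_x^{m-2}}\le P(R)\,\|\pe-\tilde\psi_{\epsilon}\|_{H_x^m}$ on the ball of radius $R$.

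Combining the smoothing estimate with the nonlinear bound gives, for $\pe$ in the ball of radius $R\sim\|\phi\|_{H_x^m}$,
\[
\|\Phi(\pe)(t)\|_{H_x^m}\le\|\phi_{\epsilon}\|_{H_x^m}+CP(R)\int_0^t\bigl(1+(\epsilon(t-s))^{-1/2}\bigr)ds\le\|\phi\|_{H_x^m}+CP(R)\epsilon^{-1/2}\sqrt{T_{\epsilon}},
\]
where I used the Bona--Smith bound $\|\phi_{\epsilon}\|_{H_x^m}\le C\|\phi\|_{H_x^m}$ and the integrability of $(t-s)^{-1/2}$. The same computation applied to the Lipschitz estimate shows that $\Phi$ is a contraction as soon as $CP(R)\epsilon^{-1/2}\sqrt{T_{\epsilon}}$ is small. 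Choosing $T_{\epsilon}=T(\epsilon,\|\phi\|_{H_x^m})>0$ accordingly makes $\Phi$ a contraction of the ball into itself, so Banach's fixed point theorem yields a unique $\pe\in C([0,T_{\epsilon});H_x^m(\ttt))$; uniqueness in the full space and continuity in time follow by the usual arguments.

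The main obstacle, and the reason the regularization is introduced at all, is precisely the loss of two derivatives in $\mathcal{N}$: without the $i\epsilon\pt_x^4$ term the generator would be merely skew-adjoint, its semigroup unitary, and it could not bridge the gap between $H_x^{m-2}$ and $H_x^m$. The price paid is that the smoothing constant degenerates like $\epsilon^{-1/2}$, so the existence time $T_{\epsilon}$ shrinks to $0$ as $\epsilon\to0$. Recovering an $\epsilon$-uniform existence time is exactly the task of the modified energy of Section 3 and is not needed for this lemma.
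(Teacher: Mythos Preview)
Your proposal is correct and follows essentially the same approach as the paper: both recast (\ref{3.1}) in Duhamel form, exploit the parabolic smoothing estimate $\sup_{n}\langle n\rangle^{2}e^{-\epsilon n^{4}t}\le C(1+(\epsilon t)^{-1/2})$ to recover the two derivatives lost in $\mathcal{N}$, bound $\|\mathcal{N}\|_{H_x^{m-2}}$ polynomially in $\|\pe\|_{H_x^m}$, and close by Banach's fixed point theorem on a ball in $C([0,T_{\epsilon}];H_x^m)$ with $T_{\epsilon}\sim\epsilon$. The only cosmetic difference is that you invoke the algebra property of $H_x^{m-2}$ for $m\ge3$ where the paper appeals directly to Sobolev embedding, and your final time bound drops the harmless linear-in-$T_{\epsilon}$ term that the paper keeps; neither affects the argument.
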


\begin{proof}
We shall prove (\ref{3.1}) by using the Banach fixed point theorem. 
Let $\{W_{\epsilon}(t)\}_{t\ge0}$ be the contraction semi-group generated by 
the linear operator $i\pt_x^2+i\nu\pt_x^4-\epsilon\pt_x^4$:
\begin{eqnarray*}
[W_{\epsilon}(t)\phi](x)
=\frac{1}{\sqrt{2\pi}}\sum_{n\in\zzz}
\hat{\phi}(n)e^{inx+(-in^2+i\nu n^4-\epsilon n^4)t}.
\end{eqnarray*}
Then, the initial value problem (\ref{3.1}) is rewritten as 
the integral equation
\begin{eqnarray*}
\pe(t)=W_{\epsilon}(t)\phi_{\epsilon}
-i\int_0^tW_{\epsilon}(t-\tau){{\mathcal N}}(\psi_{\epsilon},
\overline{\psi}_{\epsilon},
\ldots,\pt_x^2\psi_{\epsilon},\pt_x^2\overline{\psi}_{\epsilon})(\tau)d\tau.
\end{eqnarray*}
We put $r=\|\phi\|_{H_x^m}$. For $T>0$, we define 
\begin{eqnarray*}
X_T^r=\{\psi\in C([0,T];H^m(\ttt))|\sup_{t\in[0,T)}\|\psi(t)\|_{H_x^m}\le 2r\}. 
\end{eqnarray*}
We shall show that the map 
\begin{eqnarray*}
\Phi(\pe)=W_{\epsilon}(t)\phi_{\epsilon}
-i\int_0^tW_{\epsilon}(t-\tau){{\mathcal N}}(\psi_{\epsilon},
\overline{\psi}_{\epsilon},
\ldots,\pt_x^2\psi_{\epsilon},\pt_x^2\overline{\psi}_{\epsilon})(\tau)d\tau
\end{eqnarray*}
is a contraction on $X_T^r$ for choosing $T$ suitably.

We easily see that 
\begin{eqnarray}
\lefteqn{\|\Phi(\pe)(t)\|_{H_x^m}}\label{3.2}\\
&\le&\|\phi\|_{H_x^m}+
\int_0^t\|W_{\epsilon}(t-\tau){{\mathcal N}}(\psi_{\epsilon},
\overline{\psi}_{\epsilon},
\ldots,\pt_x^2\psi_{\epsilon},\pt_x^2\overline{\psi}_{\epsilon})(\tau)\|_{H_x^m}d\tau.
\nonumber
\end{eqnarray}
By Plancherel's identity, we obtain
\begin{eqnarray*}
\lefteqn{\int_0^t\|W_{\epsilon}(t-\tau){{\mathcal N}}
(\psi_{\epsilon},\overline{\psi}_{\epsilon},
\ldots,\pt_x^2\psi_{\epsilon},
\pt_x^2\overline{\psi}_{\epsilon})(\tau)\|_{H_x^m}d\tau}\\
&=&
\int_0^t\left\{\sum_{n\in\zzz}\langle n\rangle^{2m}
|\widehat{{{\mathcal N}}}(\psi_{\epsilon}
,\overline{\psi}_{\epsilon},
\ldots,\pt_x^2\psi_{\epsilon}
,\pt_x^2\overline{\psi}_{\epsilon})(\tau,n)
e^{(-in^2+i\nu n^4-\epsilon n^4)(t-\tau)}|^2\right\}^{1/2}d\tau\\
&=&
\int_0^t\left\{\sum_{n\in\zzz}\langle n\rangle^{2m-4}
|\widehat{{{\mathcal N}}}(\psi_{\epsilon}
,\overline{\psi}_{\epsilon},
\ldots,\pt_x^2\psi_{\epsilon}
,\pt_x^2\overline{\psi}_{\epsilon})(\tau,n)|^2
\langle n\rangle^4
e^{-2\epsilon n^4(t-\tau)}\right\}^{1/2}d\tau\\
&\le&
\int_0^t\sup_{n\in\zzz}\{\langle n\rangle^2
e^{-\epsilon n^4(t-\tau)}\}\|{{\mathcal N}}(\psi_{\epsilon}
,\overline{\psi}_{\epsilon},
\ldots,\pt_x^2\psi_{\epsilon}
,\pt_x^2\overline{\psi}_{\epsilon})(\tau)\|_{H_x^{m-2}}d\tau.
\end{eqnarray*}
Since $\displaystyle{\sup_{n\in\zzz}\{\langle n\rangle^2
e^{-\epsilon n^4(t-\tau)}\}\le1+\epsilon^{-1/2}(t-\tau)^{-1/2}}$, 
\begin{eqnarray}
\lefteqn{\int_0^t\|W_{\epsilon}(t-\tau){{\mathcal N}}
(\psi_{\epsilon},\overline{\psi}_{\epsilon},
\ldots,\pt_x^2\psi_{\epsilon},\pt_x^2\overline{\psi}_{\epsilon})
(\tau)\|_{H_x^m}d\tau}
\label{3.4}\\
&\le&C\int_0^t\{1+\epsilon^{-1/2}(t-\tau)^{-1/2}\}\|{{\mathcal N}}
(\psi_{\epsilon},\overline{\psi}_{\epsilon},
\ldots,\pt_x^2\psi_{\epsilon}
,\pt_x^2\overline{\psi}_{\epsilon})(\tau)\|_{H_x^{m-2}}d\tau
\nonumber\\
&\le&C(t+\epsilon^{-1/2}t^{1/2})\sup_{t\in[0,T)}\|{{\mathcal N}}
(\psi_{\epsilon},\overline{\psi}_{\epsilon},
\ldots,\pt_x^2\psi_{\epsilon},
\pt_x^2\overline{\psi}_{\epsilon})(t)\|_{H_x^{m-2}}.
\nonumber
\end{eqnarray}
Collecting (\ref{3.2}) and (\ref{3.4}), we have
\begin{eqnarray*}
\lefteqn{\sup_{t\in[0,T)}\|\Phi(\pe)(t)\|_{H_x^m}}\\
&\le&\|\phi\|_{H_x^m}+C(T+\epsilon^{-1/2}T^{1/2})
\sup_{t\in[0,T)}\|{{\mathcal N}}(\psi_{\epsilon},
\overline{\psi}_{\epsilon},
\ldots,\pt_x^2\psi_{\epsilon},\pt_x^2\overline{\psi}_{\epsilon})
(t)\|_{H_x^{m-2}}.
\end{eqnarray*}
By Sobolev's embedding, we have
\begin{eqnarray*}
\lefteqn{\sup_{t\in[0,T)}\|{{\mathcal N}}(\psi_{\epsilon},\overline{\psi}_{\epsilon},
\cdots,\pt_x^2\psi_{\epsilon},\pt_x^2\overline{\psi}_{\epsilon})(t)\|_{H_x^{m-2}}}\\
&\le&C(1+\sup_{t\in[0,T)}\|\psi(t)\|_{H_x^{m}}^2)
\sup_{t\in[0,T)}\|\psi(t)\|_{H_x^{m}}^3.
\end{eqnarray*}
Therefore, 
\begin{eqnarray*}
\sup_{t\in[0,T)}\|\Phi(\pe)(t)\|_{H_x^m}
\le r+C(T+\epsilon^{-1/2}T^{1/2})
(1+r^2)r^3.
\end{eqnarray*}
We can easily check that $\Phi(\pe)\in C([0,T);H^m(\ttt))$. 
Therefore, by choosing $T_{\epsilon}>0$ sufficiently 
small so that $C(T_{\epsilon}+\epsilon^{-1/2}T_{\epsilon}^{1/2})
(1+r^2)r^2<1$ we have $\pe\in X_T^r$. 
By a similar way, for $\pe^1,\pe^2\in X_T^r$, we have
\begin{eqnarray*}
\lefteqn{\sup_{t\in[0,T)}\|\Phi(\pe^1)(t)-\Phi(\pe^2)(t)\|_{H_x^m}}\\
&\le&C(T+\epsilon^{-1/2}T^{1/2})
(1+r^2)r^2\sup_{t\in[0,T)}\|\pe^1(t)-\pe^2(t)\|_{H_x^m}\\
&<&\sup_{t\in[0,T)}\|\pe^1(t)-\pe^2(t)\|_{H_x^m}.
\end{eqnarray*}
Consequently, we have that $\Phi$ is a contraction on $X_T^r$. 
The Banach fixed point theorem implies the unique existence 
of solution to (\ref{3.1}) in $X_T^r$ which completes the 
proof of Lemma \ref{l3.1}. 
\end{proof}

\section{Modified Energy} \label{sec:energy}

In this section, by using the modified energy, we give an a priori 
estimates for the solution to (\ref{3.1}) obtained by Lemma \ref{l3.1}.

Let $m\ge1$ be an integer. We introduce the modified energy:
\begin{eqnarray*}
\lefteqn{[E_m(\psi)](t)}\\
&=&\|\pt_x^m\psi(t)\|_{L_x^2}^{2}+\|\psi(t)\|_{L_x^2}^{2}
+C_m\|\psi(t)\|_{L_x^2}^{4m+2}\\
& &+\frac{\lambda_5}{\nu}\mbox{Re}\int_{\ttt}(\pt_x^{m-1}\psi)^2
\overline{\psi}^2dx+\frac{2\lambda_3+\lambda_4+2(m-1)\lambda_6}{4\nu}
\int_{\ttt}|\pt_x^{m-1}\psi|^2|\psi|^2dx,
\end{eqnarray*}
where $C_m$ is a sufficiently large constant depending only on 
$m$ so that $E_m(\psi)$ is positive. This is possible because 
of the following reason. The Gagliado-Nirenberg 
inequality (Lemma \ref{gn}) implies 
\begin{eqnarray*}
\frac{\lambda_5}{\nu}\mbox{Re}\int_{\ttt}(\pt_x^{m-1}\psi)^2
\overline{\psi}^2dx&+&\frac{2\lambda_3+\lambda_4+2(m-1)\lambda_6}{4\nu}
\int_{\ttt}|\pt_x^{m-1}\psi|^2|\psi|^2dx
\\
&\ge&-\frac12\|\pt_x^m\psi(t)\|_{L_x^2}^2
-\frac12\|\psi(t)\|_{L_x^2}^{2}
-D_m\|\psi(t)\|_{L_x^2}^{4m+2}
\end{eqnarray*}
with some positive constant 
$D_m$ depending only on $\nu, \lambda_3, \lambda_4, \lambda_5, 
\lambda_6$ and $m$. Hence we obtain
\begin{eqnarray*}
[E_m(\psi)](t)\ge\frac12\|\pt_x^m\psi(t)\|_{L_x^2}^2+
\frac12\|\psi(t)\|_{L_x^2}^2
+(C_m-D_m)\|\psi(t)\|_{L_x^2}^{4m+2}.
\end{eqnarray*}
Choosing $C_m$ so large that $C_m>D_m$, we have 
$[E_m(\psi)](t)>0$. We notice that
\begin{eqnarray}
\frac12\|\psi(t)\|_{H_x^m}^2\le[E_m(\psi)](t)\le C(\|\psi(t)\|_{L_x^2}^{4m}+1)
\|\psi(t)\|_{H_x^m}^2.\label{g4}
\end{eqnarray}

\begin{lemma}\label{mo} 
Let $\psi_{\epsilon}\in C([0,T_{\epsilon}),H^m(\ttt))$ be a solution to (\ref{3.1}). 
Then, there exists positive constants $C$ and $T=T(\|\phi\|_{H_x^m})$ 
which are 
independent of $\epsilon$ such that 
\begin{eqnarray*}
\|\psi_{\epsilon}(t)\|_{H_x^m}\le C(T,\|\phi\|_{L_x^2})
\|\phi\|_{H_x^m},
\end{eqnarray*}
for any $t\in[0,T)$.
\end{lemma}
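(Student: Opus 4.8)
The plan is to derive a differential inequality for the modified energy $[E_m(\pe)](t)$ whose constants are independent of $\epsilon$, and then to close it by a Gronwall-type argument. First I would differentiate $[E_m(\pe)](t)$ in $t$, inserting the regularized equation (\ref{3.1}) written as $\pt_t\pe=i\pt_x^2\pe+i\nu\pt_x^4\pe-\epsilon\pt_x^4\pe-i{\mathcal N}$. In the contributions to $\frac{d}{dt}\|\pt_x^m\pe\|_{L_x^2}^2$ and $\frac{d}{dt}\|\pe\|_{L_x^2}^2$ coming from the genuinely dispersive operator $i\pt_x^2+i\nu\pt_x^4$, all purely linear terms cancel after integration by parts, since this operator is skew-adjoint. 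The regularizing part $-\epsilon\pt_x^4$ produces the dissipative terms $-2\epsilon\|\pt_x^{m+2}\pe\|_{L_x^2}^2$ and $-2\epsilon\|\pt_x^2\pe\|_{L_x^2}^2$, both non-positive; I would keep the first one and discard the rest. Retaining this single good term is what eventually makes all estimates uniform in $\epsilon\in(0,1]$.

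The heart of the matter is the cancellation of the two worst terms of (\ref{e2}), namely those carrying $\pt_x^{m+1}\pe$. To this end I would differentiate the two correction integrals $\frac{\lambda_5}{\nu}\mbox{Re}\int_{\ttt}(\pt_x^{m-1}\pe)^2\overline{\pe}^2dx$ and $\frac{2\lambda_3+\lambda_4+2(m-1)\lambda_6}{4\nu}\int_{\ttt}|\pt_x^{m-1}\pe|^2|\pe|^2dx$, once more substituting (\ref{3.1}). Their dominant contribution arises when the dispersive operator $i\nu\pt_x^4$ lands on the highest-order factor $\pt_x^{m-1}\pe$, promoting it to $\pt_x^{m+3}\pe$; integrating by parts twice to redistribute derivatives onto the low-order factors produces exactly terms of the type appearing in (\ref{e2}). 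The explicit coefficients $\lambda_5/\nu$ and $(2\lambda_3+\lambda_4+2(m-1)\lambda_6)/(4\nu)$ are tuned so that the factor $1/\nu$ absorbs the $\nu$ coming from $i\nu\pt_x^4$ and the resulting terms cancel the two $\pt_x^{m+1}\pe$-terms in (\ref{e2}). I expect this computation --- identifying precisely which integrations by parts generate the leading $\pt_x^{m+1}\pe$ contributions and checking that the numerical constants match --- to be the principal obstacle.

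It remains to bound everything that survives the cancellation. Every such term now carries at most $m$ derivatives on its highest-order factor, the remaining factors being of order at most two; I would estimate them by the Gagliardo--Nirenberg inequality (Lemma \ref{gn}) together with the Sobolev embedding $H_x^m\hookrightarrow W_x^{2,\infty}$ (valid for the range of $m$ considered), bounding the low-order factors $\|\pe\|_{L_x^\infty}$, $\|\pt_x\pe\|_{L_x^\infty}$, $\|\pt_x^2\pe\|_{L_x^\infty}$ by $\|\pe\|_{H_x^m}$. This dominates the surviving terms, together with the time derivatives of $\|\pe\|_{L_x^2}^2$ and $C_m\|\pe\|_{L_x^2}^{4m+2}$, by a polynomial in $\|\pe\|_{H_x^m}$. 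Finally, I must control the extra terms generated when $-\epsilon\pt_x^4$ is applied inside the two correction integrals; these carry a factor $\epsilon$ and at worst the factor $\pt_x^{m+1}\pe$, so by interpolation and Young's inequality $\epsilon\|\pt_x^{m+1}\pe\|_{L_x^2}^2\le\delta\,\epsilon\|\pt_x^{m+2}\pe\|_{L_x^2}^2+C_\delta\,\epsilon\|\pt_x^m\pe\|_{L_x^2}^2$, and with $\delta$ small they are absorbed into the retained dissipation $-2\epsilon\|\pt_x^{m+2}\pe\|_{L_x^2}^2$, the leftover being $\le C\|\pt_x^m\pe\|_{L_x^2}^2$ uniformly in $\epsilon\in(0,1]$.

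Collecting these bounds and using the equivalence (\ref{g4}) between $[E_m(\pe)](t)$ and $\|\pe(t)\|_{H_x^m}^2$, I would arrive at a differential inequality $\frac{d}{dt}[E_m(\pe)](t)\le\Lambda([E_m(\pe)](t))$ with $\Lambda$ an increasing polynomial independent of $\epsilon$. A standard comparison (continuity) argument then yields a time $T=T(\|\phi\|_{H_x^m})>0$, independent of $\epsilon$, on which $[E_m(\pe)](t)$ remains bounded by a constant multiple of its initial value. Since the Bona--Smith data obey $\|\phi_{\epsilon}\|_{H_x^m}\le C\|\phi\|_{H_x^m}$, a last application of (\ref{g4}) --- bounding $[E_m(\pe)](0)$ by $C(\|\phi\|_{L_x^2}^{4m}+1)\|\phi\|_{H_x^m}^2$ from above and $\frac12\|\pe(t)\|_{H_x^m}^2$ by $[E_m(\pe)](t)$ from below --- converts this into the asserted estimate $\|\pe(t)\|_{H_x^m}\le C(T,\|\phi\|_{L_x^2})\|\phi\|_{H_x^m}$ for all $t\in[0,T)$, completing the proof.
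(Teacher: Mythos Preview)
Your proposal is correct and follows essentially the same route as the paper: differentiate $[E_m(\pe)](t)$, use the equation to identify the two worst $\pt_x^{m+1}\pe$ terms in $\frac{d}{dt}\|\pt_x^m\pe\|_{L_x^2}^2$, verify that the time derivatives of the two correction integrals (with the fourth-order dispersive part integrated by parts twice) produce exactly their negatives, absorb the $\epsilon$-terms coming from the corrections into the retained dissipation $2\epsilon\|\pt_x^{m+2}\pe\|_{L_x^2}^2$, and close with a Gronwall-type ODE argument uniform in $\epsilon$. The only cosmetic difference is that the paper tracks the Gagliardo--Nirenberg exponents precisely to obtain the sharp form $\frac{d}{dt}[E_m(\pe)](t)\le C[E_m(\pe)](t)^2$ (and hence an explicit $T$), whereas you are content with a generic polynomial $\Lambda$; either suffices for the stated conclusion.
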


\begin{proof} 
We first evaluate $[E_m(\psi)](t)$. 
Applying the $m$-th derivative the both sides of (\ref{3.1}), 
taking the inner product of the resultant equation with $\pt_x^m\psi$, and 
adding the complex conjugation of the produce, we obtain
\begin{eqnarray}
\lefteqn{\frac{d}{dt}\|\pt_x^m\psi_{\epsilon}(t)\|_{L_x^2}^2
+2\epsilon\|\pt_x^{m+2}\psi_{\epsilon}(t)\|_{L_x^2}^2}
\label{aa1}\\
&=&2\mbox{Im}\int_{\ttt}
\pt_x^m{{\mathcal N}}(\psi_{\epsilon},
\overline{\psi}_{\epsilon},
\ldots,\pt_x^2\psi_{\epsilon},\pt_x^2\overline{\psi}_{\epsilon})
\pt_x^m\overline{\psi}_{\epsilon}dx.
\nonumber
\end{eqnarray}
Using the Leibniz rule, we obtain
\begin{eqnarray}
\lefteqn{\pt_x^m{{\mathcal N}}(\psi_{\epsilon},
\overline{\psi}_{\epsilon},
\ldots,\pt_x^2\psi_{\epsilon},\pt_x^2\overline{\psi}_{\epsilon})}
\label{aa2}\\
&=&\{(2\lambda_3+m\lambda_6)\overline{\psi}_{\epsilon}\pt_x\psi_{\epsilon}
+(\lambda_4+m\lambda_6)\psi_{\epsilon}\pt_x\overline{\psi}_{\epsilon}\}\pt_x^{m+1}\psi_{\epsilon}
\nonumber\\
& &+(\lambda_4+2m\lambda_5)\psi_{\epsilon}\pt_x\psi_{\epsilon}\pt_x^{m+1}\overline{\psi}_{\epsilon}
+\lambda_6|\psi_{\epsilon}|^2\pt_x^{m+2}\psi_{\epsilon}+\lambda_5\psi_{\epsilon}^2
\pt_x^{m+2}\overline{\psi}_{\epsilon}
\nonumber\\
& &+P_1(\psi_{\epsilon},\overline{\psi}_{\epsilon},
\ldots,\pt_x^m\psi_{\epsilon},\pt_x^m\overline{\psi}_{\epsilon}),
\nonumber
\end{eqnarray}
where $P_1$ is a linear combination of the cubic terms 
$\pt_x^{j_1}\psi_{\epsilon}\pt_x^{j_2}\overline{\psi}_{\epsilon}\pt_x^{j_3}\psi_{\epsilon}$
with $j_1+j_2+j_3=m$, 
the cubic terms 
$\pt_x^{j_1}\psi_{\epsilon}\pt_x^{j_2}\overline{\psi}_{\epsilon}\pt_x^{j_3}\psi_{\epsilon}$ 
with $j_1+j_2+j_3=m+2$ and $j_1+j_2+j_3\le m$, and 
the quintic terms  
$\pt_x^{j_1}\psi_{\epsilon}\pt_x^{j_2}\overline{\psi}_{\epsilon}
\pt_x^{j_3}\psi_{\epsilon}
\pt_x^{j_4}\overline{\psi}_{\epsilon}\pt_x^{j_5}\psi_{\epsilon}
$
with $j_1+j_2+j_3+j_4+j_5=m$.
Hence the H\"{o}lder and Gagliardo-Nirenberg (Lemma \ref{gn})  inequalities imply
\begin{eqnarray}
\lefteqn{\|P_1\|_{L_x^2}}
\label{aa30}\\
&\le&C(\|\psi_{\epsilon}\|_{L_x^2}^{(2m-1)/m}
\|\psi_{\epsilon}\|_{H_x^m}^{(m+1)/m}
+\|\psi_{\epsilon}\|_{L_x^2}^{(4m-2)/m}
\|\psi_{\epsilon}\|_{H_x^m}^{(m+2)/m}\nonumber\\
& &+\|\psi_{\epsilon}\|_{L_x^2}^{(2m-3)/m}
\|\psi_{\epsilon}\|_{H_x^m}^{(m+3)/m})\nonumber\\
&\le&C[E_m(\psi_{\epsilon})](t)^{3/2}.
\nonumber
\end{eqnarray} 
In the last inequality we used the inequalities 
\begin{eqnarray*}
\|\psi\|_{L_x^2}&\le&[E_m(\psi)](t)^{\alpha}\qquad \mbox{for\ any}\ \ 
\frac{1}{4m+2}\le\alpha\le\frac12,\\
\|\psi\|_{H_x^m}&\le&[E_m(\psi)](t)^{1/2}.
\end{eqnarray*}
Substituting (\ref{aa2}) and (\ref{aa30}) into (\ref{aa1}), we have
\begin{eqnarray}
\lefteqn{\frac{d}{dt}\|\pt_x^m\psi_{\epsilon}(t)\|_{L_x^2}^2
+2\epsilon\|\pt_x^{m+2}\psi_{\epsilon}(t)\|_{L_x^2}^2}
\label{aa12}\\
&=&
2(2\lambda_3+m\lambda_6)\mbox{Im}\int_{\ttt}
\overline{\psi}_{\epsilon}\pt_x\psi_{\epsilon}
\pt_x^{m}\overline{\psi}_{\epsilon}\pt_x^{m+1}\psi_{\epsilon}dx\nonumber\\
& &+2(\lambda_4+m\lambda_6)\mbox{Im}\int_{\ttt}
\psi_{\epsilon}\pt_x\overline{\psi}_{\epsilon}
\pt_x^{m}\overline{\psi}_{\epsilon}\pt_x^{m+1}\psi_{\epsilon}dx\nonumber\\
& &+2(\lambda_4+2m\lambda_5)\mbox{Im}\int_{\ttt}
\psi_{\epsilon}\pt_x\psi_{\epsilon}\pt_x^{m}
\overline{\psi}_{\epsilon}\pt_x^{m+1}\overline{\psi}_{\epsilon}dx\nonumber\\
& &
+2\lambda_6\mbox{Im}\int_{\ttt}|\psi_{\epsilon}|^2
\pt_x^{m}\overline{\psi}_{\epsilon}\pt_x^{m+2}\psi_{\epsilon}dx
+2\lambda_5\mbox{Im}\int_{\ttt}\psi_{\epsilon}^2\pt_x^{m}\overline{\psi}_{\epsilon}
\pt_x^{m+2}\overline{\psi}_{\epsilon}dx\nonumber\\
& &+2\mbox{Im}\int_{\ttt}P_1(\psi_{\epsilon},\overline{\psi}_{\epsilon},
\ldots,\pt_x^m\psi_{\epsilon},\pt_x^m\overline{\psi}_{\epsilon})
\pt_x^m\overline{\psi}_{\epsilon}dx\nonumber\\
&\equiv&I_1+I_2+I_3+I_4+I_5+I_6.\nonumber
\end{eqnarray}
The inequality (\ref{aa30}) and the Schwarz inequality imply 
\begin{eqnarray*}
|I_6|\le\|P_1\|_{L_x^2}\|\pt_x^m\psi\|_{L_x^2}
\le C[E_m(\psi_{\epsilon})](t)^{2}.
\end{eqnarray*}
An integration by parts yields
\begin{eqnarray*}
|I_3|\le C[E_m(\psi_{\epsilon})](t)^{2}.
\end{eqnarray*}
We can express $I_2$, $I_4$ and $I_5$ in terms of $I_1$ by 
using an integration by parts:
\begin{eqnarray*}
I_2&=&2(\lambda_4+m\lambda_6)
\mbox{Im}\int_{\ttt}
\overline{\psi}_{\epsilon}\pt_x\psi_{\epsilon}
\pt_x^{m}\overline{\psi}_{\epsilon}\pt_x^{m+1}\psi_{\epsilon}dx\\
& &+R_1(\psi_{\epsilon},
\overline{\psi}_{\epsilon},
\ldots,\pt_x^m\psi_{\epsilon},\pt_x^m\overline{\psi}_{\epsilon}),\\
I_4&=&-4\lambda_6
\mbox{Im}\int_{\ttt}
\overline{\psi}_{\epsilon}\pt_x\psi_{\epsilon}
\pt_x^{m}\overline{\psi}_{\epsilon}\pt_x^{m+1}\psi_{\epsilon}dx
\\
& &+R_2(\psi_{\epsilon},
\overline{\psi}_{\epsilon},
\ldots,\pt_x^m\psi_{\epsilon},\pt_x^m\overline{\psi}_{\epsilon}),\\
I_5&=&-2\lambda_5
\mbox{Im}\int_{\ttt}
\psi_{\epsilon}^2(\pt_x^{m+1}\overline{\psi}_{\epsilon})^2dx
\\
& &+R_3(\psi_{\epsilon},\overline{\psi}_{\epsilon},
\ldots,\pt_x^m\psi_{\epsilon},\pt_x^m\overline{\psi}_{\epsilon}),
\end{eqnarray*}
where $R_1, R_2$ and $R_3$ satisfy 
\begin{eqnarray*}
|R_1|+|R_2|+|R_3|\le C[E_m(\psi_{\epsilon})](t)^{2}.
\end{eqnarray*}
Substituting above equations into (\ref{aa12}), we have
\begin{eqnarray}
\lefteqn{\frac{d}{dt}\|\pt_x^m\psi_{\epsilon}(t)\|_{L_x^2}^2
+2\epsilon\|\pt_x^{m+2}\psi(t)\|_{L_x^2}^2}\label{aaa1}\\
&=&2\{2\lambda_3+\lambda_4+2(m-1)\lambda_6\}
\mbox{Im}\int_{\ttt}\overline{\psi}_{\epsilon}
\pt_x\psi_{\epsilon}\cdot\pt_x^m\overline{\psi}_{\epsilon}
\pt_x^{m+1}\psi_{\epsilon}dx
\nonumber\\
& &-2\lambda_5\mbox{Im}
\int_{\ttt}\psi_{\epsilon}^2(\pt_x^{m+1}\overline{\psi}_{\epsilon})^2dx
+R_4(\psi_{\epsilon},
\overline{\psi}_{\epsilon},
\ldots,\pt_x^m\psi_{\epsilon},\pt_x^m\overline{\psi}_{\epsilon}),
\nonumber
\end{eqnarray}
where $R_4$ satisfies  
\begin{eqnarray*}
|R_4|\le C[E_m(\psi_{\epsilon})](t)^{2}.
\end{eqnarray*}
On the other hand, from the equation (\ref{4NLS}), we have 
\begin{eqnarray}
\lefteqn{\frac{d}{dt}\mbox{Re}\int_{\ttt}(\pt_x^{m-1}\psi_{\epsilon})^2
\overline{\psi}_{\epsilon}^2dx}
\label{o1}\\
&=&2\mbox{Re}\int_{\ttt}\pt_x^{m-1}\psi_{\epsilon}
\pt_t\pt_x^{m-1}\psi_{\epsilon}\cdot\overline{\psi}_{\epsilon}^2dx
+2\mbox{Re}\int_{\ttt}(\pt_x^{m-1}\psi_{\epsilon})^2
\overline{\psi}_{\epsilon}\pt_t\overline{\psi}_{\epsilon}dx
\nonumber\\
&=&-2\epsilon\mbox{Re}\int_{\ttt}\pt_x^{m-1}\psi_{\epsilon}
\pt_x^{m+3}\psi_{\epsilon}\cdot\overline{\psi}_{\epsilon}^2dx
-2\mbox{Im}\int_{\ttt}\pt_x^{m-1}\psi_{\epsilon}
\pt_x^{m+1}\psi_{\epsilon}\cdot\overline{\psi}_{\epsilon}^2dx
\nonumber\\
& &-2\nu\mbox{Im}\int_{\ttt}\pt_x^{m-1}\psi_{\epsilon}
\pt_x^{m+3}\psi_{\epsilon}\cdot\overline{\psi}_{\epsilon}^2dx
\nonumber\\
& &
+2\mbox{Im}\int_{\ttt}\pt_x^{m-1}\psi_{\epsilon}
\pt_x^{m-1}\mathcal{N}(\psi_{\epsilon},
\overline{\psi}_{\epsilon},
\ldots,\pt_x^2\psi_{\epsilon},\pt_x^2\overline{\psi}_{\epsilon})
\cdot\overline{\psi}_{\epsilon}^2dx
\nonumber\\
& &-2\epsilon\mbox{Re}\int_{\ttt}(\pt_x^{m-1}\psi_{\epsilon})^2
\overline{\psi}_{\epsilon}\pt_x^4\overline{\psi}_{\epsilon}dx
+2\mbox{Im}\int_{\ttt}(\pt_x^{m-1}\psi_{\epsilon})^2
\overline{\psi}_{\epsilon}\cdot\pt_x^2\overline{\psi}_{\epsilon}dx
\nonumber\\
& &
+2\nu\mbox{Im}\int_{\ttt}(\pt_x^{m-1}\psi_{\epsilon})^2
\overline{\psi}_{\epsilon}\cdot\pt_x^4\overline{\psi}_{\epsilon}dx
\nonumber\\
& &
-2\mbox{Im}\int_{\ttt}(\pt_x^{m-1}\psi_{\epsilon})^2
\overline{\psi}_{\epsilon}\cdot\overline{\mathcal{N}}
(\psi_{\epsilon},
\overline{\psi}_{\epsilon},
\ldots,\pt_x^2\psi_{\epsilon},\pt_x^2\overline{\psi}_{\epsilon})dx
\nonumber\\
&\equiv&I_7+I_8+I_9+I_{10}+I_{11}+I_{12}+I_{13}+I_{14}.
\nonumber
\end{eqnarray}
An integration by parts yields
\begin{eqnarray*}
I_7
&=&-2\epsilon\mbox{Re}\int_{\ttt}(\pt_x^{m+1}\psi_{\epsilon})^2\overline{\psi}_{\epsilon}^2dx
\\
& &+R_5(\psi_{\epsilon},
\overline{\psi}_{\epsilon},
\cdots,\pt_x^m\psi_{\epsilon},\pt_x^m\overline{\psi}_{\epsilon}),\\
I_9
&=&
-2\nu\mbox{Im}\int_{\ttt}(\pt_x^{m+1}\psi_{\epsilon})^2
\overline{\psi}_{\epsilon}^2dx
\\
& &+R_6(\psi_{\epsilon},
\overline{\psi}_{\epsilon},
\cdots,\pt_x^m\psi_{\epsilon},\pt_x^m\overline{\psi}_{\epsilon}),
\end{eqnarray*}
where $R_5$ and $R_6$ satisfy
\begin{eqnarray*}
|R_5|+|R_6|&\le&C\|\psi_{\epsilon}\|_{L_x^2}^{(2m-3)/m}
\|\psi_{\epsilon}\|_{H_x^m}^{(2m+3)/m}\\
&\le&C[E_m(\psi_{\epsilon})](t)^2.
\end{eqnarray*}
Integrating by parts, we also obtain 
\begin{eqnarray*}
|I_8|+|I_{12}|&\le&C\|\psi_{\epsilon}\|_{L_x^2}^{(2m-1)/m}
\|\psi_{\epsilon}\|_{H_x^m}^{(2m+1)/m}\\
&\le&C[E_m(\psi_{\epsilon})](t)^2,\\
|I_{11}|+|I_{13}|&\le&
C\|\psi_{\epsilon}\|_{L_x^2}^{(2m-3)/m}
\|\psi_{\epsilon}\|_{H_x^m}^{(2m+3)/m}\\
&\le&C[E_m(\psi_{\epsilon})](t)^2,\\
|I_{10}|+|I_{14}|&\le&C(
\|\psi_{\epsilon}\|_{L_x^2}^4\|\psi_{\epsilon}\|_{H_x^m}^2
+\|\psi_{\epsilon}\|_{L_x^2}^{(6m-1)/m}\|\psi_{\epsilon}\|_{H_x^m}^{(2m+1)/m}\\
& &+\|\psi_{\epsilon}\|_{L_x^2}^{(4m-2)/m}\|\psi_{\epsilon}\|_{H_x^m}^{(2m+2)/m})\\
&\le&C[E_m(\psi_{\epsilon})](t)^2.\\
\end{eqnarray*}
Substituting above equations into (\ref{o1}), we have
\begin{eqnarray}
\lefteqn{\frac{d}{dt}\mbox{Re}
\int_{\ttt}(\pt_x^{m-1}\psi_{\epsilon})^2\overline{\psi}_{\epsilon}^2dx
}\label{aa11}\\
&=&-2\nu\mbox{Im}\int_{\ttt}(\pt_x^{m+1}\psi_{\epsilon})^2\overline{\psi}_{\epsilon}^2dx
+2\epsilon\mbox{Re}\int_{\ttt}(\pt_x^{m+1}\psi_{\epsilon})^2\overline{\psi}_{\epsilon}^2dx
\nonumber\\
& &
+R_7(\psi_{\epsilon},
\overline{\psi}_{\epsilon},
\ldots,\pt_x^m\psi_{\epsilon},\pt_x^m\overline{\psi}_{\epsilon}),
\nonumber
\end{eqnarray}
where $R_7$ satisfies
\begin{eqnarray*}
|R_7|\le C[E_m(\psi_{\epsilon})](t)^2.
\end{eqnarray*}
By an argument similar to (\ref{aa11}), we obtain
\begin{eqnarray}
\lefteqn{
\frac{d}{dt}\int_{\ttt}|\pt_x^{m-1}\psi_{\epsilon}|^2|\psi_{\epsilon}|^2dx
}\label{aaa2}\\
&=&-8\nu\mbox{Im}\int_{\ttt}\overline{\psi}_{\epsilon}\pt_x\psi_{\epsilon}\cdot
\pt_x^m\overline{\psi}_{\epsilon}\pt_x^{m+1}\psi_{\epsilon}dx
-2\epsilon\int_{\ttt}|\pt_x^{m+1}\psi_{\epsilon}|^2|\psi_{\epsilon}|^2dx
\nonumber\\
& &+
R_8(\psi_{\epsilon},
\overline{\psi}_{\epsilon},
\ldots,\pt_x^m\psi_{\epsilon},\pt_x^m\overline{\psi}_{\epsilon}),
\nonumber
\end{eqnarray}
where $R_8$ satisfies
\begin{eqnarray*}
|R_8|\le C[E_m(\psi_{\epsilon})](t)^2.
\end{eqnarray*}
Finally, we obtain 
\begin{eqnarray}
\lefteqn{\frac{d}{dt}\|\psi_{\epsilon}(t)\|_{L_x^2}^{4m+2}
+2(2m+1)\epsilon\|\psi_{\epsilon}(t)\|_{L_x^2}^{4m}
\|\pt_x^2\psi_{\epsilon}(t)\|_{L_x^2}^2}
\label{aaa4}\\
& &\qquad\qquad\qquad
\qquad\qquad\qquad\qquad\qquad
\le C[E_m(\psi_{\epsilon})](t)^2.\nonumber
\end{eqnarray}
Collecting (\ref{o1}), (\ref{aa11}), (\ref{aaa2}), and (\ref{aaa4}),
we have
\begin{eqnarray}
\lefteqn{\frac{d}{dt}E_m(t)
+2\epsilon\|\pt_x^{m+2}\psi_{\epsilon}(t)\|_{L_x^2}^2
+2(2m+1)\epsilon\|\psi_{\epsilon}(t)\|_{L_x^2}^{4m}\|\pt_x^2\psi_{\epsilon}(t)\|_{L_x^2}^2}
\label{l1}\\
&=&
-\frac{2\lambda_5}{\nu}\epsilon\mbox{Re}\int_{\ttt}(\pt_x^{m+1}\psi_{\epsilon})^2
\overline{\psi}_{\epsilon}^2dx
\nonumber\\
& &-\frac{2\lambda_3+\lambda_4+2(m-1)\lambda_6}{2\nu}
\epsilon\int_{\ttt}|\pt_x^{m+1}\psi_{\epsilon}|^2|\psi_{\epsilon}|^2dx.
\nonumber\\
& &+
R_{9}(\psi_{\epsilon},\overline{\psi_{\epsilon}},
\ldots,\pt_x^m\psi_{\epsilon},\pt_x^m\overline{\psi}_{\epsilon}),
\nonumber
\end{eqnarray}
where $R_9$ satisfies
\begin{eqnarray*}
|R_9|\le C[E_m(\psi_{\epsilon})](t)^2.
\end{eqnarray*}
Since the sum of the first and second terms in the right hand side of (\ref{l1})
are bounded by 
$\displaystyle{\epsilon\|\pt_x^{m+2}\psi_{\epsilon}
\|_{L_x^2}^2+C[E_m(\psi_{\epsilon})](t)^{2}}$,
we obtain
\begin{eqnarray*}
\frac{d}{dt}[E_m(\psi_{\epsilon})](t)
&+&\epsilon\|\pt_x^{m+2}\psi_{\epsilon}(t)\|_{L_x^2}^2
+2(2m+1)\epsilon
\|\psi_{\epsilon}(t)\|_{L_x^2}^{4m}\|\pt_x^2\psi_{\epsilon}(t)\|_{L_x^2}^2
\\
&\le&C[E_m(\psi_{\epsilon})](t)^2.
\end{eqnarray*}
Therefore
\begin{eqnarray*}
\frac{d}{dt}[E_m(\psi_{\epsilon})](t)\le C[E_m(\psi_{\epsilon})](t)^2.
\end{eqnarray*}
We note that the constant $C$ is independent of $\epsilon\in(0,1]$.
From the above inequality we have 
\begin{eqnarray*}
[E_m(\psi_{\epsilon})](t)\le\frac{[E_m(\psi_{\epsilon})](0)}{
1-Ct[E_m(\psi_{\epsilon})](0)}.
\end{eqnarray*}
for $\displaystyle{0\le t<\min\{T_{\epsilon},C^{-1}[E_m(\psi_{\epsilon})](0)^{-1}\}}$. 
Combing this inequality, 
$\|\phi_{\epsilon}\|_{H^m}\le\|\phi\|_{H^m}$ for any $\epsilon\in(0,1]$ and 
(\ref{g4}), 
we see
\begin{eqnarray*}
\|\psi_{\epsilon}\|_{H_x^m}^2\le\frac{C(\|\phi\|_{L_x^2}^{4m}+1)\|\phi\|_{H_x^m}^2}{
1-Ct(\|\phi\|_{L_x^2}^{4m}+1)\|\phi\|_{H_x^m}^2}.
\end{eqnarray*}
for $\displaystyle{0\le t<\min\{T_{\epsilon},C^{-1}(\|\phi\|_{L_x^2}^{4m}
+1)^{-1}
\|\phi\|_{H_x^m}^{-2}\}}$. 
Let $T\equiv(2C)^{-1}(\|\phi\|_{L_x^2}^{4m}
+1)^{-1}\|\phi\|_{H_x^m}^{-2}$. 
Then for any $0<t<\min\{T_{\epsilon},
T\}$, we have
\begin{eqnarray*}
\|\psi_{\epsilon}\|_{H_x^m}^2\le2C(\|\phi\|_{L_x^2}^{4m}
+1)\|\phi\|_{H_x^m}^2.
\end{eqnarray*}
If $T_{\epsilon}<T$, 
we can apply Lemma \ref{l3.1} to extend the solution in the same class to 
the interval $[0,T)$. 
Therefore we obtain the desired result. 
\end{proof}

Using Lemma \ref{mo} we obtain the existence 
of the solution to (\ref{4NLS}):

\begin{lemma}\label{ell} 
Let $m\ge4$ be 
an integer. 
For any $\phi\in H^m(\ttt)$, there exists a time 
$T=T(\|\phi\|_{H^m})>0$ and a solution 
$\psi$ of (\ref{4NLS}) satisfying
\begin{eqnarray*}
\psi\in L^{\infty}([0,T);H^m(\rre)).
\end{eqnarray*} 
\end{lemma}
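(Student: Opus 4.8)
The plan is to obtain the solution as a subsequential limit of the parabolic approximations $\psi_{\epsilon}$ furnished by Lemma \ref{l3.1}, exploiting the $\epsilon$-uniform bound of Lemma \ref{mo} together with a compactness argument in a space of lower regularity. First I would fix $\phi\in H^m(\ttt)$ with $m\ge4$ and let $T=T(\|\phi\|_{H^m})$ be the common existence time of Lemma \ref{mo}. For every $\epsilon\in(0,1]$ that lemma extends $\psi_{\epsilon}$ to $[0,T)$ and yields
\[
\sup_{0<\epsilon\le1}\ \sup_{0\le t<T}\|\psi_{\epsilon}(t)\|_{H_x^m}\le M:=C(T,\|\phi\|_{L_x^2})\|\phi\|_{H_x^m}.
\]
Thus $\{\psi_{\epsilon}\}$ is bounded in $L^{\infty}([0,T);H^m(\ttt))$, and by the Banach--Alaoglu theorem there are a sequence $\epsilon_k\to0$ and a limit $\psi$ with $\psi_{\epsilon_k}\rightharpoonup\psi$ weak-$*$ in $L^{\infty}([0,T);H^m(\ttt))$; weak-$*$ lower semicontinuity of the norm preserves the bound $M$, so $\psi\in L^{\infty}([0,T);H^m(\ttt))$.

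Since the weak-$*$ limit cannot by itself be inserted into the nonlinear term, the central step is to upgrade to strong convergence in a lower-order space. Reading $\pt_t\psi_{\epsilon}$ off (\ref{3.1}),
\[
\pt_t\psi_{\epsilon}=i\pt_x^2\psi_{\epsilon}+i(\nu+i\epsilon)\pt_x^4\psi_{\epsilon}-i\mathcal{N}(\psi_{\epsilon},\overline{\psi}_{\epsilon},\ldots,\pt_x^2\psi_{\epsilon},\pt_x^2\overline{\psi}_{\epsilon}),
\]
and using that $H^{m-2}(\ttt)$ is a Banach algebra (as $m\ge3$), so that the estimate in the proof of Lemma \ref{l3.1} gives $\|\mathcal{N}(\psi_{\epsilon})\|_{H_x^{m-2}}\le C(M)$, together with $\|\epsilon\pt_x^4\psi_{\epsilon}\|_{H_x^{m-4}}\le M$ for $\epsilon\le1$, I obtain a uniform bound for $\pt_t\psi_{\epsilon}$ in $L^{\infty}([0,T);H^{m-4}(\ttt))$. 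Because $\ttt$ is compact, the Rellich theorem makes $H^m\hookrightarrow\hookrightarrow H^{m-1}$ compact while $H^{m-1}\hookrightarrow H^{m-4}$ is continuous; the Aubin--Lions--Simon compactness lemma applied to $H^m\hookrightarrow\hookrightarrow H^{m-1}\hookrightarrow H^{m-4}$ then furnishes a further subsequence (not relabeled) converging strongly to $\psi$ in $C([0,T'];H^{m-1}(\ttt))$ for every $T'<T$.

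With strong $H^{m-1}$ convergence secured I would pass to the limit $\epsilon_k\to0$ in (\ref{3.1}) in the sense of distributions on $(0,T)\times\ttt$. The terms $\pt_t\psi_{\epsilon_k}$, $\pt_x^2\psi_{\epsilon_k}$ and $\nu\pt_x^4\psi_{\epsilon_k}$ converge distributionally to $\pt_t\psi$, $\pt_x^2\psi$ and $\nu\pt_x^4\psi$ (differentiation being continuous for the weak-$*$ convergence in $L^{\infty}H^m$); the artificial viscosity obeys $\|\epsilon_k\pt_x^4\psi_{\epsilon_k}\|_{H_x^{m-4}}\le\epsilon_k M\to0$ and drops out; and since $m\ge4$ forces $(m-1)-2=m-3>1/2$, the nonlinear map is locally Lipschitz from $H^{m-1}$ into $H^{m-3}$, whence $\mathcal{N}(\psi_{\epsilon_k})\to\mathcal{N}(\psi)$ in $C([0,T'];H^{m-3}(\ttt))$. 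Therefore $\psi$ solves (\ref{4NLS}), and from $\psi_{\epsilon_k}(0)=\phi_{\epsilon_k}\to\phi$ in $H^m$ together with the strong convergence we get $\psi(0)=\phi$. The main obstacle is exactly this compactness step: the bound from Lemma \ref{mo} carries no smallness as $\epsilon\to0$, so the strong convergence needed to pass the quasilinear nonlinearity to the limit must be purchased from the time-derivative bound through Aubin--Lions, and one has to place the intermediate integer order $m-1$ simultaneously below $m$ (for the compact embedding) and above $5/2$ (so that $\mathcal{N}$, which loses two derivatives, still maps $H^{m-1}$ continuously into $H^{m-3}$) --- precisely what the hypothesis $m\ge4$ provides.
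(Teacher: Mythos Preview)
Your proposal is correct and follows essentially the same approach as the paper: uniform $H^m$ bound from Lemma~\ref{mo}, weak-$*$ compactness, and passage to the limit in (\ref{3.1}). The paper's own proof is a brief sketch that invokes a ``standard limiting argument'' and explicitly omits the details; your write-up supplies precisely those details via the time-derivative bound and the Aubin--Lions--Simon lemma, which is the expected way to justify the step the paper leaves to the reader.
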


\begin{proof} Let 
$\phi\in H^m(\ttt)$ and let $\{\phi_{\epsilon}\}_{\epsilon}\subset 
H^{\infty}(\ttt)$ be a Bona-Smith approximation of $\phi$. 
Then by Lemma \ref{l3.1} there exists 
a unique solution $\psi_{\epsilon}\in C([0,T_{\epsilon});H^m(\ttt))$ 
to (\ref{3.1}). Lemma \ref{mo} yields that there exists $T=T(\|\phi\|_{H_x^m})>0$ 
which is independent of $\epsilon$ 
such that $\{\psi_{\epsilon}\}_{\epsilon}$ 
is uniformly bounded in $L^{\infty}(0,T;H^m(\ttt))$ with respect to $\epsilon\in(0,1]$. 
By a standard limiting argument, it is 
inferred that a subsequence of $\psi^{\epsilon}$ convergence in $L^{\infty}(0,T;H^m(\ttt))$ 
weak$^{\ast}$ to a solution $\psi$ of (\ref{4NLS}) such that $\psi^{\epsilon}\in 
L^{\infty}(0,T;H^m(\ttt))$. We omit the detail. 
\end{proof}

\section{Proof of Theorem 
\ref{well-posedness}} \label{sec:unique}

In the preceding sections, we proved 
the existence of the solution to (\ref{4NLS}). 
In this section, we complete the 
proof of Theorem \ref{well-posedness} by 
showing the following three assertions

\vskip2mm

(i) uniqueness of the solution

(ii) persistent properties of the solution 

(iii) continuous dependence of the solution upon initial data

\subsection{Uniqueness}

Let $\psi_1$ and $\psi_2$ be two solutions to (\ref{4NLS}) 
with same initial data satisfying 
$\sup_{t\in[0,T)}\|\psi_j(t)\|_{H_x^m}
<\infty$, $j=1,2$. 
We shall show that $\psi_1\equiv\psi_2$ for $t\in[0,T)$. 
To prove this, it suffices to show that 
$\psi=\psi_2-\psi_1$ satisfies $\|\psi(t)\|_{H_x^1}\equiv0$ 
because this identity and $\psi(0)\equiv0$ implies $\psi\equiv0$. 
The reason we prove $\|\psi(t)\|_{H_x^1}\equiv0$ instead of 
driving $\|\psi(t)\|_{L_x^2}\equiv0$ is that the corresponding 
modified energy for $L^2$ involves the anti-derivatives 
of $\psi$. 

The standard energy estimate yields 
\begin{eqnarray}
\lefteqn{\frac{d}{dt}\|\psi(t)\|_{L_x^2}^2}
\label{y0}\\
&=&
2\mbox{Im}
\int_{\ttt}
\{\mathcal{N}(\psi+\psi_1,\overline{\psi}+\overline{\psi}_1
,\ldots,\pt_x^2\psi+\pt_x^2\psi_1,
\pt_x^2\overline{\psi}+\pt_x^2\overline{\psi}_1)
\nonumber\\
& &\qquad\qquad\qquad\qquad
\qquad\qquad
-\mathcal{N}(\psi_1,\overline{\psi}_1
,\ldots,\pt_x^2\psi_1,
\pt_x^2\overline{\psi}_1)\}\overline{\psi}dx
\nonumber\\
&\le&C(
(\|\psi_1\|_{H_x^2}^2+\|\psi_1\|_{H_x^2}^4
+\|\psi_2\|_{H_x^2}^2+\|\psi_2\|_{H_x^2}^4)
\|\psi\|_{H_x^1}^2,\nonumber\\
\lefteqn{\frac{d}{dt}\|\pt_x\psi(t)\|_{L_x^2}}
\label{y1}\\
&=&
2\mbox{Im}
\int_{\ttt}
\pt_x\{\mathcal{N}(\psi+\psi_1,\overline{\psi}+\overline{\psi}_1
,\ldots,\pt_x^2\psi+\pt_x^2\psi_1,
\pt_x^2\overline{\psi}+\pt_x^2\overline{\psi}_1)
\nonumber\\
& &\qquad\qquad\qquad\qquad
\qquad\qquad
-\mathcal{N}(\psi_1,\overline{\psi}_1,
\ldots,\pt_x^2\psi_1,
\pt_x^2\overline{\psi}_1)\}\pt_x\overline{\psi}dx
\nonumber\\
&=&2(2\lambda_3+\lambda_4)
\mbox{Im}\int_{\ttt}\pt_x\psi_1\overline{\psi}_1
\cdot\pt_x^2\psi\pt_x\overline{\psi}dx
-2\lambda_5\mbox{Im}\int_{\ttt}\psi_1^2(\pt_x^2\overline{\psi})^2dx
\nonumber\\
& &+
R_{10}(\psi_1,\overline{\psi}_1,\ldots,\pt_x^3\psi_1,\pt_x^3\overline{\psi}_1,
\psi_2,\overline{\psi}_2,\ldots,\pt_x^3\psi_2,\pt_x^3\overline{\psi}_2),
\nonumber
\end{eqnarray}
where $R_{10}$ satisfies
\begin{eqnarray*}
|R_{10}|\le C(\|\psi_1\|_{H_x^3}^2+\|\psi_1\|_{H_x^3}^4
+\|\psi_2\|_{H_x^3}^2+\|\psi_2\|_{H_x^3}^4)
\|\psi\|_{H_x^1}^2.
\end{eqnarray*}
On the other hand, a direct calculation yields
\begin{eqnarray}
\lefteqn{\frac{2\lambda_3+\lambda_4}{4\nu}
\frac{d}{dt}\int_{\ttt}|\psi_1|^2|\psi|^2dx}
\label{y2}\\
&=&-2(2\lambda_3+\lambda_4)
\mbox{Im}\int_{\ttt}\pt_x\psi_1\overline{\psi}_1
\cdot\pt_x^2\psi\pt_x\overline{\psi}dx
\nonumber\\
& &+R_{11}(\psi_1,\overline{\psi}_1,\ldots,
\pt_x^3\psi_1,\pt_x^3\overline{\psi}_1,
\psi_2,\overline{\psi}_2,\ldots,\pt_x^3\psi_2,\pt_x^3
\overline{\psi}_2),
\nonumber
\end{eqnarray}
\begin{eqnarray}
\lefteqn{\frac{\lambda_5}{\nu}
\frac{d}{dt}\mbox{Re}\int_{\ttt}\psi_1^2(\overline{\psi})^2dx}
\label{w3}\\
&=&2\lambda_5\mbox{Im}\int_{\ttt}\psi_1^2
(\pt_x^2\overline{\psi})^2dx
\nonumber\\
& &+R_{12}(\psi_1,\overline{\psi}_1,\ldots,\pt_x^3\psi_1,
\pt_x^3\overline{\psi}_1,
\psi_2,\overline{\psi}_2,\ldots,
\pt_x^3\psi_2,\pt_x^3\overline{\psi}_2),
\nonumber
\end{eqnarray}
where $R_{11}$ and $R_{12}$ satisfy 
\begin{eqnarray*}
|R_{11}|+|R_{12}|\le C(\|\psi_1\|_{H_x^3}^2+\|\psi_1\|_{H_x^3}^6+
\|\psi_2\|_{H_x^3}^2+\|\psi_2\|_{H_x^3}^6)
\|\psi\|_{H_x^1}^2.
\end{eqnarray*}
Here we set
\begin{eqnarray*}
[\tilde{E}_1(\psi)](t)
&=&\|\pt_x\psi(t)\|_{L_x^2}^2
+\tilde{C}_1\|\psi(t)\|_{L_x^2}^2
\\
& &+\frac{2\lambda_3+\lambda_4}{4\nu}\int_{\ttt}
|\psi_1|^2|\psi|^2dx
+
\frac{\lambda_5}{\nu}\mbox{Re}\int_{\ttt}
\psi_1^2(\overline{\psi})^2dx,
\end{eqnarray*}
where $\tilde{C}_1$ is a sufficiently large constant depending 
only on 
$m$, $\sup_{t\in[0,T)}\|\psi_1(t)\|_{H_x^1}$ and 
 $\sup_{t\in[0,T)}\|\psi_2(t)\|_{H_x^1}$ 
so that $\tilde{E}_1(\psi)$ is positive. Then, from 
(\ref{y0}), (\ref{y1}), (\ref{y2}) and (\ref{w3}), we obtain
\begin{eqnarray*}
\lefteqn{\frac{d}{dt}[\tilde{E}_1(\psi)](t)}\\
&\le&
C(\|\psi_1\|_{H_x^3}^2+\|\psi_1\|_{H_x^3}^6+\|\psi_2\|_{H_x^3}^2+\|\psi_2\|_{H_x^3}^6)
\|\psi\|_{L_x^2}^2\\
&\le&C[\tilde{E}_1(\psi)](t).
\end{eqnarray*}
Hence Gronwall's lemma yields 
\begin{eqnarray}
[\tilde{E}_1(\psi)](t)\le
[\tilde{E}_1(\psi)](0)e^{ct}.\label{y4}
\end{eqnarray}
Since $[\tilde{E}_m(\psi)](0)=0$, Gronwall's lemma yields
$[\tilde{E}_1(\psi)](t)\equiv0$. Combination of this identity and 
the equality $0\le\|\psi(t)\|_{H_x^1}
\le[\tilde{E}_1(\psi)](t)$ implies $\psi\equiv0$, 
which completes the proof of the uniqueness. 


\subsection{Persistence of solution} \label{sec:contu}

To prove the persistent property of the solution to (\ref{4NLS}) 
which is obtained by Lemma \ref{ell}, 
we employ the Bona-Smith approximation. 
We denote $\phi_{\epsilon}$ the Bona-Smith approximation 
of $\phi$. 

\begin{lemma}\label{m2} 
Let $\phi\in H^m(\ttt)$ with $m\ge3$ and 
let $\psi_{\alpha}$, $\psi_{\epsilon}$ denote 
the solution to (\ref{4NLS}) corresponding 
to the initial data $\phi_{\alpha}$ and $\phi_{\epsilon}$, 
respectively. Then there exists $C=C(T,\|\phi\|_{H_x^m})>0$ 
such that for $0\le\alpha<\epsilon\le1$, 
\begin{eqnarray}
\lefteqn{\sup_{t\in[0,T)}
\|\psi_{\alpha}(t)-\psi_{\epsilon}(t)\|_{H_x^m}}
\label{ww}\\
&\le&C(\epsilon^{m-3}+\|\phi-\phi_{\alpha}\|_{H_x^m}
+\|\phi-\phi_{\epsilon}\|_{H_x^m}).
\nonumber
\end{eqnarray}
\end{lemma}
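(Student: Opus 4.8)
The plan is to study the difference $\psi:=\psi_\alpha-\psi_\epsilon$ with the modified-energy machinery of Lemma \ref{mo} and the uniqueness computation of Section \ref{sec:unique}. Since $\psi_\alpha,\psi_\epsilon$ both solve (\ref{4NLS}), the function $\psi$ satisfies $i\pt_t\psi+\pt_x^2\psi+\nu\pt_x^4\psi=\mathcal{N}(\psi_\alpha,\overline{\psi}_\alpha,\ldots,\pt_x^2\overline{\psi}_\alpha)-\mathcal{N}(\psi_\epsilon,\overline{\psi}_\epsilon,\ldots,\pt_x^2\overline{\psi}_\epsilon)$, whose right-hand side is a finite sum of terms linear in $\psi,\overline{\psi},\ldots,\pt_x^2\psi,\pt_x^2\overline{\psi}$ with coefficients that are products of (derivatives up to second order of) $\psi_\alpha$ and $\psi_\epsilon$. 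Before estimating the difference I would record two families of bounds on the common interval $[0,T)$ furnished by Lemma \ref{mo}: first, the uniform control $\sup_{[0,T)}(\|\psi_\alpha\|_{H_x^m}+\|\psi_\epsilon\|_{H_x^m})\le C(T,\|\phi\|_{H^m})$; and second, smoothing bounds for the regularized solutions. The latter follow by bootstrapping the modified-energy identity at regularity level $m+l$, which, once the $H_x^m$ norm is already controlled, becomes a \emph{linear} differential inequality in the top norm (its coefficient being controlled by the bounded $H_x^m$ norm via $H^m\hookrightarrow W^{2,\infty}$ for $m\ge3$); Gronwall then gives $\|\psi_\epsilon\|_{L^\infty_TH_x^{m+l}}\le C\epsilon^{-l}\|\phi\|_{H^m}$, and likewise with $\alpha$.

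The next step is a low-order difference estimate. I would apply the computation of Section \ref{sec:unique} to $\psi_1=\psi_\epsilon$ and $\psi_2=\psi_\alpha$, but now \emph{without} the same-data hypothesis: the modified energy $\tilde{E}_1(\psi)$ still obeys $\frac{d}{dt}\tilde{E}_1(\psi)\le C\tilde{E}_1(\psi)$, with $C$ depending only on the uniformly bounded $H_x^3$ norms of the two solutions. Gronwall's lemma yields $\sup_{[0,T)}\|\psi(t)\|_{H_x^1}^2\le C\tilde{E}_1(\psi)(0)\le C\|\phi_\alpha-\phi_\epsilon\|_{H_x^1}^2$. Using the Bona-Smith estimate $\|\phi-\phi_\delta\|_{H_x^{m-l}}\le C\delta^l\|\phi\|_{H_x^m}$ with $l=m-1$ together with $\alpha<\epsilon$, this produces the crucial smallness $\sup_{[0,T)}\|\psi(t)\|_{H_x^1}\le C\epsilon^{m-1}\|\phi\|_{H_x^m}$.

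The heart of the argument is the top-order estimate. I would introduce a modified energy $\tilde{E}_m(\psi)$ of the same shape as $E_m$, but with the correction integrals $\mathrm{Re}\int(\pt_x^{m-1}\psi)^2\overline{\psi}^2dx$ and $\int|\pt_x^{m-1}\psi|^2|\psi|^2dx$ replaced by integrals in which the two undifferentiated copies of $\psi$ are replaced by the solutions $\psi_\alpha,\psi_\epsilon$, the precise combination being chosen so that $\frac{d}{dt}\tilde{E}_m(\psi)$ retains no term carrying $\pt_x^{m+1}$ or $\pt_x^{m+2}$ of a single solution. Differentiating and inserting the difference equation, the worst terms produced by $\frac{d}{dt}\|\pt_x^m\psi\|_{L_x^2}^2$ — including the dangerous ones containing $\pt_x^{m+2}\psi_\alpha$, which carry the unfavourable factor $\alpha^{-2}$ and hence cannot be treated as source terms — must cancel against the time derivatives of the correction integrals, exactly as $I_1$–$I_5$ cancel in Lemma \ref{mo}. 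The remainder splits into two types: (i) genuinely lower-order terms bounded by $C\tilde{E}_m(\psi)$ with $C$ depending only on the uniform $H_x^m$ bounds; and (ii) source terms coming from the mismatch between the coefficient of a correction integral and the coefficient in the equation, a mismatch that is itself $O(\psi)$ and carries at most $\pt_x^{m+2}$ of the \emph{smoother} solution $\psi_\epsilon$. Estimating a type-(ii) term by placing $\pt_x^{m+2}\psi_\epsilon$ in $L_x^2$ (size $\epsilon^{-2}$), the low-order factor of $\psi$ in $L_x^\infty$ controlled by $H_x^1$ (size $\epsilon^{m-1}$ from the previous step), and $\pt_x^m\psi$ in $L_x^2$ (size $\tilde{E}_m(\psi)^{1/2}$) gives a contribution $\le C\epsilon^{m-3}\tilde{E}_m(\psi)^{1/2}$. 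Thus $\frac{d}{dt}\tilde{E}_m(\psi)\le C\tilde{E}_m(\psi)+C\epsilon^{m-3}\tilde{E}_m(\psi)^{1/2}$, i.e. $\frac{d}{dt}\tilde{E}_m(\psi)^{1/2}\le C\tilde{E}_m(\psi)^{1/2}+C\epsilon^{m-3}$.

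Integrating this differential inequality, using $\tilde{E}_m(\psi)(0)\le C\|\phi_\alpha-\phi_\epsilon\|_{H_x^m}^2\le C(\|\phi-\phi_\alpha\|_{H_x^m}^2+\|\phi-\phi_\epsilon\|_{H_x^m}^2)$ and the equivalence $\frac12\|\psi\|_{H_x^m}^2\le\tilde{E}_m(\psi)$, then yields (\ref{ww}). I expect the main obstacle to be the construction of $\tilde{E}_m(\psi)$ and the verification that every order-$(m+1)$ and order-$(m+2)$ contribution of \emph{both} solutions cancels: unlike in Lemma \ref{mo}, the coefficients of the correction integrals and of the equation no longer coincide identically (they involve $\psi_\alpha$ in one place and $\psi_\epsilon$ in another), so one must check that each potentially uncancelled top-derivative term is in fact of the harmless mismatch form — proportional to $\psi$, hence absorbable through the $H_x^1$ smallness — rather than a genuine $\alpha^{-2}$-order loss. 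Tracking exactly how many derivatives fall on $\psi_\epsilon$ versus on $\psi$ in each residual term, so as to certify the precise power $\epsilon^{m-3}$ and nothing worse, is the delicate bookkeeping on which the estimate rests.
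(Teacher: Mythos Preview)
Your plan is essentially the paper's proof: an $H^1$ difference estimate via $\tilde E_1$ giving $\|\psi\|_{H^1}\le C\epsilon^{m-1}$, then a difference modified energy $\tilde E_m$ whose correction terms cancel the $\pt_x^{m+1}\psi,\pt_x^{m+2}\psi$ contributions, leaving source terms of the form $(\text{low-order }\psi)\cdot\pt_x^{m+2}\psi_\epsilon$ bounded by $\epsilon^{m-1}\cdot\epsilon^{-2}=\epsilon^{m-3}$. One clarification worth noting: the paper writes $\psi_\alpha=\psi+\psi_\epsilon$ in the nonlinearity so that the highest derivatives land only on $\psi$ or on $\psi_\epsilon$ (never on $\psi_\alpha$ alone), and correspondingly takes \emph{both} undifferentiated coefficients in the correction integrals to be $\psi_\alpha$ --- with this choice the cancellation is exact and your worry about uncancelled $\pt_x^{m+2}\psi_\alpha$ terms simply does not arise.
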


\begin{proof} 
We put $\psi=\psi_{\alpha}-\psi_{\epsilon}$. 
We first evaluate $\|\psi(t)\|_{H_x^1}$. 
Replacing $\psi=\psi_1-\psi_2$ by 
$\psi=\psi_{\alpha}-\psi_{\epsilon}$ in (\ref{y4}), 
we have
\begin{eqnarray}
[\tilde{E}_1(\psi)](t)\le
[\tilde{E}_1(\psi)](0)e^{Ct}, 
\label{x1}
\end{eqnarray}
for $t\in[0,T)$, where 
\begin{eqnarray*}
[\tilde{E}_1(\psi)](t)
&=&\|\pt_x\psi(t)\|_{L_x^2}^2
+\tilde{C}_1\|\psi(t)\|_{L_x^2}^2
\\
& &+\frac{2\lambda_3+\lambda_4}{4\nu}\int_{\ttt}
|\psi_{\alpha}|^2|\psi|^2dx
+
\frac{\lambda_5}{\nu}\mbox{Re}\int_{\ttt}
\psi_{\alpha}^2(\overline{\psi})^2dx,
\end{eqnarray*}
$\tilde{C}_1$ is a sufficiently large constant depending only on 
$m$, $\sup_{t\in[0,T)}\|\psi(t)\|_{H_x^m}$ 
\footnote{By the inequality $\|\phi_{\epsilon}\|_{H_x^m}
\le\|\phi\|_{H^m}$ and Lemma \ref{ell}, 
we can choose $\tilde{C}_1$ independently of $\alpha$.}
so that $\tilde{E}_1(\psi)$ is positive and 
$C$ in (\ref{x1}) depends only on 
$\sup_{t\in[0,T)}\|\psi(t)\|_{H_x^3}$.  
Since 
\begin{eqnarray}
\|\psi(t)\|_{H_x^1}^2&\le&[\tilde{E}_1(\psi)](t),
\label{x2}\\
\mathop{[}\tilde{E}_1(\psi)](0)
&\le&C\|\phi_{\alpha}-\phi_{\epsilon}\|_{H_x^1}^2
\le C(\|\phi-\phi_{\alpha}\|_{H_x^1}^2
+\|\phi-\phi_{\epsilon}\|_{H_x^1}^2)\label{x3}\\
&\le&C(\alpha^{2(m-1)}+\epsilon^{2(m-1)})
\le C\epsilon^{2(m-1)},\nonumber
\end{eqnarray}
the inequalities (\ref{x1}), (\ref{x2}) and (\ref{x3}) lead to 
the inequality  
\begin{eqnarray}
\|\psi(t)\|_{H_x^1}\le C\epsilon^{m-1}.
\label{r1}
\end{eqnarray}
Next, we evaluate $\|\psi(t)\|_{H_x^m}$. 
By an argument similar to (\ref{aa1}),
\begin{eqnarray}
\lefteqn{\frac{d}{dt}\|\pt_x^m\psi(t)\|_{L_x^2}^2}
\label{bb1}\\
&=&2\mbox{Im}\int_{\ttt}
\pt_x^m\{{{\mathcal N}}(\psi+\psi_{\epsilon},
\overline{\psi}+\overline{\psi}_{\epsilon},
\ldots,\pt_x^2\psi+\pt_x^2\psi_{\epsilon},\pt_x^2\overline{\psi}+\pt_x^2\overline{\psi}_{\epsilon})
\nonumber\\
& &\qquad\qquad\qquad\qquad\qquad\qquad
-{{\mathcal N}}(\psi_{\epsilon},
\overline{\psi}_{\epsilon},\ldots,
\pt_x^2\psi_{\epsilon},
\pt_x^2\overline{\psi}_{\epsilon})\}\pt_x^m\overline{\psi}dx.
\nonumber
\end{eqnarray}
Using the Leibniz rule, we have
\begin{eqnarray}
\lefteqn{\pt_x^m\{{{\mathcal N}}(\psi+\psi_{\epsilon},
\overline{\psi}+\overline{\psi}_{\epsilon},\ldots,
\pt_x^2\psi+\pt_x^2\psi_{\epsilon},
\pt_x^2\overline{\psi}+\pt_x^2\overline{\psi}_{\epsilon})}
\label{x6}\\
& &\qquad\qquad\qquad\qquad\qquad\qquad
-{{\mathcal N}}(\psi_{\epsilon},\overline{\psi}_{\epsilon}
\ldots,\pt_x^2\psi_{\epsilon},
\pt_x^2\overline{\psi}_{\epsilon})\}
\nonumber\\
&=&\{(2\lambda_3+m\lambda_6)\overline{\psi}_{\alpha}\pt_x\psi_{\alpha}
+(\lambda_4+m\lambda_6)\psi_{\alpha}\pt_x\overline{\psi}_{\alpha}\}\pt_x^{m+1}\psi
\nonumber\\
& &+(\lambda_4+2m\lambda_5)\psi_{\alpha}\pt_x\psi_{\alpha}\pt_x^{m+1}\overline{\psi}
\nonumber\\
& &+\lambda_6|\psi_{\alpha}|^2\pt_x^{m+2}\psi+\lambda_5\psi_{\alpha}^2\pt_x^{m+2}\overline{\psi}
\nonumber\\
& &+P_2(\psi_{\alpha},
\overline{\psi}_{\alpha},\ldots,
\pt_x^m\psi_{\alpha}, 
\pt_x^m\overline{\psi}_{\alpha},
\psi_{\epsilon},
\overline{\psi}_{\epsilon},\ldots,
\pt_x^{m+2}\psi_{\epsilon},
\pt_x^{m+2}\overline{\psi}_{\epsilon}),
\nonumber
\end{eqnarray}
and
\begin{eqnarray*}
\lefteqn{P_2(\psi_{\alpha},
\overline{\psi}_{\alpha},\ldots,
\pt_x^m\psi_{\alpha}, 
\pt_x^m\overline{\psi}_{\alpha},
\psi_{\epsilon},
\overline{\psi}_{\epsilon},\ldots,
\pt_x^{m+2}\psi_{\epsilon},
\pt_x^{m+2}\overline{\psi_{\epsilon}})}\\
&=&\{(2\lambda_3+m\lambda_6)
(\overline{\psi}_{\alpha}\pt_x\psi
+\pt_x\psi_{\epsilon}\overline{\psi})
+(\lambda_4+m\lambda_6)
\psi_{\alpha}\pt_x\overline{\psi}+\pt_x\overline{\psi}_{\epsilon}
\psi\}\pt_x^{m+1}\psi_{\epsilon}\\
& &+(\lambda_4+2m\lambda_5)
(\psi_{\alpha}\pt_x\psi+\pt_x\psi_{\epsilon}\psi)\pt_x^{m+1}
\overline{\psi}_{\epsilon}\\
& &+\lambda_6(\overline{\psi}_{\alpha}\psi+\psi_{\epsilon}\overline{\psi})\pt_x^{m+2}\psi_{\epsilon}
+\lambda_5(\psi_{\alpha}\psi+\psi_{\epsilon}\psi)\pt_x^{m+2}\overline{\psi}_{\epsilon}\\
& &+P_3(\psi_{\alpha},\ldots,\pt_x^m\overline{\psi}_{\alpha},
\psi_{\epsilon},\ldots,
\pt_x^m\overline{\psi}_{\epsilon}),
\end{eqnarray*}
where $P_3$ satisfies
\begin{eqnarray}
\|P_3\|_{L_x^2}&\le&C(\|\psi_{\alpha}\|_{H_x^{m}}^2
+\|\psi_{\alpha}\|_{H_x^m}^4
+\|\psi_{\epsilon}\|_{H_x^{m}}^2
+\|\psi_{\epsilon}\|_{H_x^m}^4)
\|\psi\|_{H_x^{m}}\label{x3}\\
&\le&C\|\psi\|_{H_x^{m}}.\nonumber
\end{eqnarray}
Combining the inequalities (\ref{r1}) and (\ref{x3}) and 
$\|\psi_{\epsilon}\|_{H_x^{m+2}}
\le C\epsilon^{-2}\|\psi\|_{H_x^m}$, 
we have 
\begin{eqnarray*}
\|P_2\|_{L_x^2}&\le&
C(\|\psi_{\alpha}\|_{H_x^1}+\|\psi_{\epsilon}\|_{H_x^1})
\|\psi_{\epsilon}\|_{H_x^{m+2}}
\|\psi\|_{H_x^1}+C\|\psi\|_{H_x^m}\\
&\le&C\epsilon^{m-3}+C\|\psi\|_{H_x^m}.
\end{eqnarray*}
The identity (\ref{x6}) and a standard energy estimate yield
\begin{eqnarray}
\lefteqn{\frac{d}{dt}\|\pt_x^{m}\psi(t)\|_{L_x^2}^2}
\label{y3}\\
&=&2\{2\lambda_3+\lambda_4+2(m-1)\lambda_6\}
\mbox{Im}\int_{\ttt}
\pt_x\psi_{\alpha}\overline{\psi}_{\alpha}
\pt_x^{m+1}\psi\pt_x^{m}\overline{\psi}dx
\nonumber\\
& &-2\lambda_5\mbox{Im}\int_{\ttt}\psi_{\alpha}^2(\pt_x^{m+1}\overline{\psi})^2dx
\nonumber\\
& &
+R_{13}(\psi_{\alpha},\overline{\psi}_{\alpha},\ldots,
\pt_x^m\psi_{\alpha}, 
\pt_x^m\overline{\psi}_{\alpha},
\psi_{\epsilon},
\overline{\psi}_{\epsilon},\ldots,
\pt_x^{m+2}\psi_{\epsilon},\pt_x^{m+2}\overline{\psi}_{\epsilon}),
\nonumber
\end{eqnarray}
where $R_{13}$ satisfies
\begin{eqnarray*}
|R_{13}|\le C\epsilon^{m-3}+C\|\psi\|_{H_x^m}^2.
\end{eqnarray*}
On the other hand, 
a direct calculation yields
\begin{eqnarray}
\lefteqn{\frac{2\lambda_3+\lambda_4+2(m-1)\lambda_6}{4\nu}
\frac{d}{dt}\int_{\ttt}|\psi_{\alpha}|^2|\pt_x^{m-1}\psi|^2dx}
\label{z2}\\
&=&-2\{2\lambda_3+\lambda_4+2(m-1)\lambda_6\}
\mbox{Im}\int_{\ttt}
\pt_x\psi_{\alpha}\overline{\psi}_{\alpha}\pt_x^{m+1}\psi\pt_x^{m}\overline{\psi}dx
\nonumber\\
& &+R_{14}(\psi_{\alpha},\overline{\psi}_{\alpha},\ldots,
\pt_x^m\psi_{\alpha}, 
\pt_x^m\overline{\psi}_{\alpha},
\psi_{\epsilon},
\overline{\psi}_{\epsilon},\ldots,
\pt_x^{m}\psi_{\epsilon},\pt_x^{m}\overline{\psi}_{\epsilon}),
\nonumber
\end{eqnarray}
and
\begin{eqnarray}
\lefteqn{\frac{\lambda_5}{\nu}\frac{d}{dt}\mbox{Re}\int_{\ttt}\psi_{\alpha}^2(\pt_x^{m-1}\overline{\psi})^2dx}
\label{z3}\\
&=&2\lambda_5\mbox{Im}\int_{\ttt}\psi_{\alpha}^2(\pt_x^{m+1}\overline{\psi})^2dx
\nonumber\\
& &
+R_{15}(\psi_{\alpha},\overline{\psi}_{\alpha},\ldots,
\pt_x^m\psi_{\alpha}, 
\pt_x^m\overline{\psi}_{\alpha},
\psi_{\epsilon},
\overline{\psi}_{\epsilon},\ldots,
\pt_x^{m}\psi_{\epsilon},\pt_x^{m}\overline{\psi}_{\epsilon}),
\nonumber
\end{eqnarray}
where $R_{14}$ and $R_{15}$ satisfy
\begin{eqnarray*}
|R_{14}|+|R_{15}|\le C\|\psi\|_{H_x^m}^2.
\end{eqnarray*}
From (\ref{y3}), (\ref{z2}) and (\ref{z3}), we obtain
\begin{eqnarray}
\frac{d}{dt}\{\|\pt_x^{m}\psi(t)\|_{L_x^2}^2
&+&\frac{2\lambda_3+\lambda_4+2(m-1)\lambda_6}{4\nu}
\int_{\ttt}|\psi_{\alpha}|^2|\pt_x^{m-1}\psi|^2dx
\label{x7}\\
&+&
\frac{\lambda_5}{\nu}\mbox{Re}\int_{\ttt}\psi_{\alpha}^2(\pt_x^{m-1}\overline{\psi})^2dx
\}
\le C\|\psi\|_{H_x^{m}}^2+C\epsilon^{m-3}.
\nonumber
\end{eqnarray}
Here we set
\begin{eqnarray*}
[\tilde{E}_m(\psi)](t)
&=&\int_{\ttt}|\pt_x^{m}\psi|^2dx
+\tilde{C}_m\int_{\ttt}|\psi|^2dx\\
& &+\int_{\ttt}|\psi_{\alpha}|^2|\pt_x^{m-1}\psi|^2dx
+\mbox{Re}\int_{\ttt}\psi_{\alpha}^2(\pt_x^{m-1}\overline{\psi})^2dx,
\end{eqnarray*}
where $\tilde{C}_m$ is a sufficiently large constant depending only on 
$m$, $\sup_{t\in[0,T)}\|\psi(t)\|_{H_x^m}$  
so that $\tilde{E}_m(\psi)$ is positive. Then 
the inequality (\ref{x7}) is expressed in terms of $\tilde{E}_m$:
\begin{eqnarray*}
\frac{d}{dt}[\tilde{E}_m(\psi)](t)\le
C[\tilde{E}_m(\psi)](t)+C\epsilon^{m-3}.
\end{eqnarray*}
Therefore Gronwall's lemma leads to the inequality 
\begin{eqnarray*}
[\tilde{E}_m(\psi)](t)\le C([\tilde{E}_m(\psi)](0)
+\epsilon^{m-3})e^{CT}.
\end{eqnarray*}
Therefore we have (\ref{ww}) which completes 
Lemma \ref{m2}. \end{proof}

\vskip2mm

Let us prove the persistent property of the solution to 
(\ref{4NLS}). Let $\phi\in H^m(\ttt)$ and 
$\{\phi_{\epsilon}\}_{\epsilon>0}\subset H^{\infty}(\ttt)$ 
be a Bona-Smith approximation of $\phi$. 
Lemma \ref{ell} yields there exists $T=T(\|\phi_{\epsilon}\|_{H_x^m})$ 
and a unique solution $\psi_{\epsilon}(t)\in L^{\infty}(0,T;H^{\infty}(\ttt))$ 
to (\ref{4NLS}). Since $\|\phi_{\epsilon}\|_{H^m}
\le \|\phi\|_{H^m}$, we can choose $T$ independently of $\epsilon$. 
By Lemma \ref{m2}, $\{\psi_{\epsilon}(t)\}_{\epsilon}$ 
is Cauchy sequence in $C(0,T;H^m(\ttt))$. Consequently  
we see that $\phi\in C(0,T;H^m(\ttt))$. This gauarantees 
the persistent property of the solution in Theorem \ref{well-posedness}. 


\subsection{Continuity of data-to-solution map} \label{sec:contu}

As the final step of the proof of Theorem \ref{well-posedness}, 
we prove that  
the data-to-solution map $S_t:H^m(\ttt)\to C([0,T);H^m(\ttt))$ 
$(\phi\mapsto\psi(t))$ associated to (\ref{4NLS}) is continuous. 
To this end, we shall prove 
the following: Let $\phi\in H^m(\ttt)$. For any $\eta>0$ 
there exists $\delta>0$ such that if $\tilde{\phi}\in H^m(\ttt)$ 
satisfies 
\begin{eqnarray*}
\|\phi-\tilde{\phi}\|_{H_x^m}<\delta,
\end{eqnarray*}
then 
\begin{eqnarray*}
\sup_{t\in[0,T)}\|S_t(\phi)-S_t(\tilde{\phi})\|_{H_x^m}<\eta.
\end{eqnarray*}
Let $\{\phi_{\epsilon}\}_{\epsilon>0}$ and 
$\{\tilde{\phi}_{\epsilon}\}_{\epsilon>0}$ be the Bona-Smith 
approximations of $\phi$ and $\tilde{\phi}$, respectively.
By the triangle inequality, we have
\begin{eqnarray}
\lefteqn{\sup_{t\in[0,T)}\|S_t(\phi)-S_t(\tilde{\phi})\|_{H_x^m}}
\label{k1}\\
&\le&
\sup_{t\in[0,T)}\|S_t(\phi)-S_t(\phi_{\epsilon})\|_{H_x^m}
+\sup_{t\in[0,T)}\|S_t(\phi_{\epsilon})-S_t(\tilde{\phi}_{\epsilon})\|_{H_x^m}
\nonumber\\
& &+
\sup_{t\in[0,T)}\|S_t(\tilde{\phi}_{\epsilon})
-S_t(\tilde{\phi})\|_{H_x^m}.
\nonumber
\end{eqnarray}
Letting $\alpha$ tend to $0$ in (\ref{ww}), we have
\begin{eqnarray}
\sup_{t\in[0,T)}\|S_t(\phi)-S_t(\phi_{\epsilon})\|_{H_x^m}
&\le&C(\epsilon^{m-3}+\|\phi-\phi_{\epsilon}\|_{H_x^m}),
\label{k2}\\
\sup_{t\in[0,T)}\|S_t(\tilde{\phi}_{\epsilon})-S_t(\tilde{\phi})\|_{H_x^m}
&\le&C(\epsilon^{m-3}+\|\tilde{\phi}-
\tilde{\phi}_{\epsilon}\|_{H_x^m}).
\label{k3}
\end{eqnarray}
By a similar argument as the derivation of (\ref{ww}), 
we obtain
\begin{eqnarray*}
\sup_{t\in[0,T)}\|S_t(\phi_{\epsilon})-S_t
(\tilde{\phi}_{\epsilon})\|_{H_x^m}
&\le&C(\epsilon^{m-3}+\|\phi_{\epsilon}-
\tilde{\phi}_{\epsilon}\|_{H_x^m}).
\end{eqnarray*}
Combining the above inequality with the triangle inequality 
\begin{eqnarray*}
\|\phi_{\epsilon}-
\tilde{\phi}_{\epsilon}\|_{H_x^m}
\le\|\phi_{\epsilon}-
\phi\|_{H_x^m}
+
\|\phi-
\tilde{\phi}\|_{H_x^m}
+
\|\tilde{\phi}-
\tilde{\phi}_{\epsilon}\|_{H_x^m},
\end{eqnarray*}
we have
\begin{eqnarray}
\lefteqn{\sup_{t\in[0,T)}\|S_t(\phi_{\epsilon})-S_t
(\tilde{\phi}_{\epsilon})\|_{H_x^m}}
\label{k4}\\
&\le&
C(\epsilon^{m-3}
+\|\phi_{\epsilon}-
\phi\|_{H_x^m}
+
\|\phi-
\tilde{\phi}\|_{H_x^m}
+
\|\tilde{\phi}-
\tilde{\phi}_{\epsilon}\|_{H_x^m}.
\nonumber
\end{eqnarray}
Substituting (\ref{k2}), (\ref{k3}) and (\ref{k4}) into 
(\ref{k1}), we obtain
\begin{eqnarray}
\lefteqn{\sup_{t\in[0,T)}\|S_t(\phi)-S_t(\tilde{\phi})\|_{H_x^m}}
\label{k5}\\
&\le& 
C(\epsilon^{m-3}
+\|\phi_{\epsilon}-\phi\|_{H_x^m}
+\|\phi-
\tilde{\phi}\|_{H_x^m}
+\|\tilde{\phi}-
\tilde{\phi}_{\epsilon}\|_{H_x^m}).
\nonumber
\end{eqnarray}
We first choose $\delta>0$ so 
that $C\delta<\eta/4$. 
Since $\phi_{\epsilon}\to\phi$ and 
$\tilde{\phi}_{\epsilon}\to\tilde{\phi}$ 
in $H^m$ as $\epsilon\to0$, there exists 
$\epsilon_0>0$ such that for $0<\epsilon\le\epsilon_0$, 
\begin{eqnarray*}
\|\phi-
\phi_{\epsilon}\|_{H_x^m}<\frac{\eta}{4},
\qquad
\|\tilde{\phi}-
\tilde{\phi}_{\epsilon}\|_{H_x^m}<\frac{\eta}{4}.
\end{eqnarray*}
Further choosing $\epsilon_0$ sufficiently 
small so that $C\epsilon_0^{m-3}<\eta/4$, 
we have that if $\tilde{\phi}\in H^m(\ttt)$ 
satisfies $\|\phi-\tilde{\phi}\|_{H_x^m}<\delta$, 
then taking $0<\epsilon\le\epsilon_0$ in (\ref{k5}), 
we have
\begin{eqnarray*}
\sup_{t\in[0,T)}\|S_t(\phi)-S_t(\tilde{\phi})\|_{H_x^m}
<\frac{\eta}{4}+\frac{\eta}{4}+\frac{\eta}{4}+\frac{\eta}{4}
=\eta.
\end{eqnarray*}
The proof of Theorem \ref{well-posedness} 
is now complete. 

\vglue 1\baselineskip
\noindent
{\bf Acknowledgments.}
The authors would like to thank Dr. Masaya Maeda for fruitful comments. 


\end{document}